\newtheorem{thm}{Theorem}[section]
 \newtheorem{cor}[thm]{Corollary}
 \newtheorem{lem}[thm]{Lemma}
 \newtheorem{prop}[thm]{Proposition}
 \theoremstyle{definition}
  \newtheorem{ex}[thm]{Example}
  \newtheorem{fact}[thm]{Fact}
 \theoremstyle{remark}
 \numberwithin{equation}{section}
\DeclareMathOperator{\Z}{\mathbb{Z}}
\DeclareMathOperator{\End}{End}
\DeclareMathOperator{\Hom}{Hom}
\DeclareMathOperator{\Ker}{Ker}
\DeclareMathOperator{\im}{Im}
\DeclareMathOperator{\Soc}{Soc}
\begin{document}
\author[Ko\c{s}an, Quynh, Srivastava]{ M. Tamer Ko\c{s}an, Truong Cong Quynh and ASHISH K. SRIVASTAVA}
\address{Department of Mathematics, Gebze Technical University, 41400 Gebze/Kocaeli, Turkey}
\email{mtkosan@gyte.edu.tr \ tkosan@gmail.com}
\address{Department of Mathematics, Danang University, 459 Ton Duc Thang, DaNang city, Vietnam}
\email{tcquynh@dce.udn.vn; tcquynh@live.com}
\address{Department of Mathematics and Computer Science, St. Louis University, St. Louis,
MO-63103, USA}
\email{asrivas3@slu.edu}
\title{Rings with each right ideal automorphism-invariant}
\keywords{automorphism-invariant module, $a$-ring, $q$-ring, injective envelope, self-injective ring}
\subjclass[2000]{16D50, 16U60, 16W20.}
\thanks{}

\maketitle

\begin{abstract} In this paper, we study rings having the property that every right ideal is automorphism-invariant. Such rings are called right $a$-rings. It is shown that (1) a right $a$-ring is a direct sum of a square-full semisimple artinian ring and a right square-free ring, (2) a ring $R$ is semisimple artinian if and only if the matrix ring $\mathbb{M}_n(R)$ is a right $a$-ring for some $n>1$, (3) every right $a$-ring is stably-finite, (4) a right $a$-ring is von Neumann regular if and only if it is semiprime, and (5) a prime right $a$-ring is simple artinian. We also describe the structure of an indecomposable right artinian right non-singular right
$a$-ring as a triangular matrix ring of certain block matrices.
\end{abstract}

\section{Introduction}

\noindent The study of rings characterized by homological properties of their one-sided ideals has been an active area of research. Rings for which every right ideal is quasi-injective (known as right $q$-rings) were introduced by Jain, Mohamed and Singh in \cite{JMS} and have been studied in a number of other papers (\cite{BJ}, \cite{BJ1}, \cite{Byrd}, \cite{Hill}-\cite{Koeh2}, \cite{Mohe1} and \cite{Mohe2}) by Beidar, Byrd, Hill, Ivanov and Koehler among other people. In \cite{Jain2} Jain, Singh and Srivastava studied rings whose each right ideal is a finite direct sum of quasi-injective right ideals and called such rings right $\Sigma$-$q$ rings. Jain, L\'opez-Permouth and Syed in \cite{Jain1} studied rings with each right ideal quasi-continuous and in \cite{CH} Clark and Huynh studied rings with each right ideal, a direct sum of quasi-continuous right ideals.

Recall that a module $M$ is called quasi-injective if $M$ is invariant under any endomorphism of its injective envelope; equivalently, any homomorphism from a submodule of $M$ to $M$ extends to an endomorphism of $M$. As a natural generalization of these modules, Dickson and Fuller \cite{DF} initiated the study of modules which are invariant under any automorphism of their injective envelope. These modules have been recently named as automorphism-invariant modules by Lee and Zhou in \cite{Lee-Zhou}. In \cite{ESS} Er, Singh and Srivastava proved that a module $M$ is automorphism-invariant if and only if any monomorphism from a submodule of $M$ to $M$ extends to an endomorphism of $M$ thus showing that automorphism-invariant modules are precisely the pseudo-injective modules studied by Jain and Singh in \cite{JS} and Teply in \cite{Teply}. Guil Asensio and Srivastava \cite{GS} have shown that automorphism-invariant modules satisfy the full exchange property and these modules also provide a new class of clean modules. The decomposition of automorphism-invariant  modules has been described in \cite{ESS}. If $M$ is an automorphism-invariant module, then $M$ has a decomposition $M=A\oplus B$ where $A$ is quasi-injective and $B$ is square-free. Recall that a module $M$ is called {\it square-free} if $M$ does not contain a nonzero submodule $N$ isomorphic to $X\oplus X$ for some module $X$. Recently, Guil Asensio, Keskin T\"ut\"unc\"u and Srivastava \cite{GKS} have initiated the study of a more general theory of modules invariant under automorphisms of their covers and envelopes. See \cite{AFT}, \cite {AS1}, \cite{AS2}, \cite{QK} and \cite{SS} for more details on automorphism-invariant modules.

Rings all of whose right ideals are automorphism-invariant are called {\it right $a$-rings} \cite{SS}. Since every quasi-injective module is automorphism-invariant, the family of right $a$-rings includes right $q$-rings. In fact, the class of right $a$-rings is a much larger class than the class of right $q$-rings as there exist examples of rings that are right $a$-rings but not right $q$-rings. The goal of this paper is to study these right $a$-rings and to extend the results in \cite{JMS} for this new class of rings. In particular, we show that:

\noindent
(1) A right $a$-ring is a direct sum of a square-full semisimple artinian ring and a right square-free ring (Theorem \ref{thm:g});

\smallskip

\noindent (2) A ring $R$ is semisimple artinian if and only if the matrix ring $\mathbb{M}_n(R)$ for some $n>1$ is an $a$-ring (Theorem \ref{pro:ma});

\smallskip

\noindent (3) If $R$ is a right $a$-ring, then $R$ is stably-finite, that is, every matrix ring over $R$ is directly-finite (Theorem \ref{sf});

\smallskip

\noindent (4)  A right $a$-ring is von Neumann regular if and only if it is semiprime (Theorem \ref{reg}), and a prime right $a$-ring is simple artinian (Theorem \ref{prime}).

\smallskip

\noindent We also characterize  indecomposable non-local right CS right $a$-rings. It is  shown that:

\smallskip
\noindent (5) Let $R$ be an indecomposable, non-local ring. Then  $R$ is a right $q$-ring if and only if $R$ is right CS and a right $a$-ring (Theorem \ref{baba}).

Let  $\Delta$ be a right $q$-ring with an essential maximal right ideal $P$ such that $\Delta/P$ is an injective right $\Delta$-module. In a right $q$-ring, every essential right ideal is two-sided by \cite[Theorem 2.3]{JMS}. Hence $\Delta/P$ is a division ring. Let $n$ be an integer with $n\geq 1$, let $D_1, D_2,\dots,D_n$ be division rings and $\Delta$ be a right $q$-ring, all of whose idempotents are central  and the right $\Delta$-module $\Delta/P$ is not embeddable into $\Delta_{\Delta}$. Next, let $V_i$ be $D_i$-$D_{i+1}$-bimodule such that
$$dim(\{V_i\}_{D_{i+1}})=1$$ for all $i=1,2,\dots,n-1$,  and let $V_n$ be a $D_n$-$\Delta$-bimodule such that $V_nP=0$ and $$dim(\{V_n\}_{\Delta/P})=1.$$ We denote
by $G_n(D_1,\dots,D_n, \Delta, V_1,\dots, V_n)$, the ring of $(n+1)\times (n+1)$ matrices of the form
$$
G_n(D_1,\dots,D_n, \Delta, V_1,\dots, V_n):=\begin{pmatrix}
D_1&V_1&&&&  \\
&D_2&V_2&&&\\
&&D_3&V_3&&\\
&&.&.&.&\\
&&&&.&.&\\
&&&&.&D_n&V_n\\
&&&&&&\Delta\\
\end{pmatrix}.$$ Consider the ring $G(D, \Delta, V)$. In \cite[Theorem 4.1]{BJ1}, it is shown that  $G(D, \Delta, V)$ is a right $q$-ring. Note that if we consider the transpose then it is a left $q$-ring. In the present paper, we obtain that

(6) $ G_n(D_1,\dots,D_n, \Delta, V_1,\dots, V_n)$ is a right $a$-ring  all of whose idempotents are central, where $\Delta$ is a right $a$-ring, $dim ({}_{D_i}\{V_i\})=dim(\{V_i\}_{D_{i+1}})=1$ for all $i=1,2,\dots,n-1$ and $dim ({}_{D_n}\{V_n\})=dim(\{V_n\}_{\Delta/P})=1$ (Theorem  \ref{ornn}).

Finally, we finish our paper with a structure theorem for an indecomposable right artinian right non-singular right
$a$-ring as a triangular matrix ring of certain block matrices.

Throughout this article all rings are associative rings with identity and all modules are right unital unless stated otherwise. For a submodule $N$ of $M$, we use $N \leq M$ ($N<M$)  to mean that $N$ is a submodule of $M$ (respectively,  proper submodule), and we write $N\leq^e M$ and $N\leq^{\oplus}  M$ to indicate that $N$ is an essential submodule of $M$ and $N$ is a direct summand  of $M$, respectively. We denote by $\Soc(M)$ and $E(M)$, the socle and the injective envelope of $M$, respectively. For any term not defined here the reader is referred to \cite{AF}, \cite{JST} and \cite{MM}.

\bigskip

\section{An Example}

\noindent As already mentioned, any right $q$-ring is a right $a$-ring. Recall that right $q$-rings are precisely those right self-injective rings for which every essential right ideal is a two-sided ideal \cite{JMS}. So, in particular, any commutative self-injective ring is a $q$-ring and hence an $a$-ring. Now we would like to present some examples of right $a$-rings that are not right $q$-rings. First, we have the following useful observation.

\begin{lem}
A commutative ring is an $a$-ring if and only if it is an automorphism-invariant ring.
\end{lem}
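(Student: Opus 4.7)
The forward direction is immediate, since $R$ is itself a right ideal of $R$, so if every right ideal is automorphism-invariant, then so is the module $R_R$.

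For the converse, suppose $R$ is commutative and automorphism-invariant as a module over itself. I would use the Er--Singh--Srivastava characterization cited in the introduction: a module $M$ is automorphism-invariant if and only if every monomorphism from a submodule of $M$ into $M$ extends to an endomorphism of $M$. So, given an ideal $I$ of $R$, I must show that every monomorphism $f\colon N\to I$ from a submodule $N\le I$ extends to some element of $\End(I_R)$.

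Composing $f$ with the inclusion $I\hookrightarrow R$ gives a monomorphism $g\colon N\to R$. Since $R_R$ is automorphism-invariant, $g$ extends to an endomorphism $\widehat{g}\in\End(R_R)$. Under the standard identification $\End(R_R)\cong R$ (sending an endomorphism to its value at $1$), $\widehat{g}$ is multiplication by some fixed element $r\in R$, so $f(n)=rn$ for every $n\in N$. Here commutativity becomes essential: because $I$ is a two-sided ideal (automatic in a commutative ring), the map $x\mapsto rx$ sends $I$ into $I$ and therefore defines an endomorphism of $I_R$ that agrees with $f$ on $N$.

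There is no real obstacle here; the only step that needs any thought is pinpointing where commutativity is used, namely in ensuring that the ambient endomorphism of $R$, which is left multiplication by $r$, restricts to an endomorphism of the submodule $I$. Without commutativity this would fail, since in general $rI$ need not be contained in $I$ for a one-sided ideal $I$.
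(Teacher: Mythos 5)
Your proof is correct, but it follows a genuinely different route from the paper. The paper never invokes the monomorphism-extension (pseudo-injectivity) characterization: given an ideal $I$, it chooses an ideal $U$ with $I\oplus U$ essential in $R$, so that $E(I\oplus U)=E(R)$; any automorphism $\varphi$ of $E(R)$ satisfies $\varphi(R)\leq R$, hence acts on $R$ as multiplication by $\varphi(1)\in R$, and commutativity gives $\varphi(1)x\in I\oplus U$ for $x\in I\oplus U$, so $I\oplus U$ is automorphism-invariant; then $I$ is automorphism-invariant because direct summands of automorphism-invariant modules are automorphism-invariant. You instead work with an arbitrary ideal $I$ directly: you use the Er--Singh--Srivastava equivalence twice (once to extend a monomorphism $N\to R$ to an endomorphism of $R_R$, which under $\End(R_R)\cong R$ is left multiplication by some $r$, and once to conclude that $I$ is automorphism-invariant after checking $rI\subseteq I$). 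Your version avoids the complement-and-summand step and isolates the role of commutativity in exactly the same place the paper does (left multiplication preserving the ideal), but it pays for this by leaning on the nontrivial ESS theorem in both directions, whereas the paper's argument uses only the definition of automorphism-invariance, the fact that an essential right ideal has the same injective envelope as $R$, and closure under direct summands. Both are valid; the paper's is the more elementary of the two.
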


\begin{proof}
Let $R$ be a commutative automorphism-invariant ring and $I$ be an ideal of $R$. There exists an ideal $U$ of $R$ such that $I\oplus U$ is essential in $R$. Then $E(R)=E(I\oplus U)$. Let $\varphi$ be an automorphism of $E(R)$. Clearly, $\varphi(1)\in R$. Now, for all $x\in I\oplus U$, we have  $\varphi(x)=\varphi(1)x\in I\oplus U$. So $\varphi(I\oplus U)\leq I\oplus U$ which implies that $I\oplus U$ is an automorphism-invariant module. Since direct summand of an automorphism-invariant module is automorphism-invariant, it follows that $I$ is automorphism-invariant. This shows that $R$ is an $a$-ring. The converse is obvious.
\end{proof}

In view of the above, we have the following example of $a$-ring which is not a $q$-ring.

\begin{ex}
Consider the ring $R$ consisting of all eventually constant sequences of elements from $\mathbb F_2$ (see {\cite[Example 9]{ESS}}). Clearly, $R$ is a commutative automorphism-invariant ring as the only automorphism of its injective envelope is the identity automorphism. Hence $R$ is an $a$-ring by the above lemma. But $R$ is not a $q$-ring because $R$ is not self-injective.
\end{ex}

\bigskip

\section{Some characterizations of $a$-rings}

\noindent In this section we will prove some characterizations for right $a$-rings. These equivalent characterizations will be more convenient to use.

\begin{prop}\label{essen} The following conditions are equivalent for a ring $R$:
\begin{enumerate}
\item $R$ is a right $a$-ring.
\item Every essential right ideal of $R$ is automorphism-invariant.
\item $R$ is right automorphism-invariant and every essential right ideal of $R$ is a left $T$-module, where  $T$ is a subring of $R$ generated by its unit elements.
\end{enumerate}
\end{prop}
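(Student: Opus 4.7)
The plan is to establish the cycle $(1) \Rightarrow (2) \Rightarrow (1)$ and the equivalence $(1) \Leftrightarrow (3)$. The implication $(1) \Rightarrow (2)$ is immediate, since every essential right ideal is in particular a right ideal. For $(2) \Rightarrow (1)$, my approach is to take an arbitrary right ideal $I$, use Zorn's lemma to find a right ideal $J$ maximal with respect to $I \cap J = 0$ so that $I \oplus J \leq^e R_R$, apply (2) to conclude $I \oplus J$ is automorphism-invariant, and then inherit the property for the direct summand $I$ via the standard closure of automorphism-invariant modules under direct summands (a fact already invoked in the preceding lemma).

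For $(1) \Rightarrow (3)$, the first clause is trivial since $R_R$ is itself a right ideal. For the $T$-module clause, I would show that $uI \subseteq I$ for every unit $u \in R$ and every essential right ideal $I$. The natural route is to recognize left multiplication $\lambda_u : r \mapsto ur$ as an automorphism of the right $R$-module $R_R$, lift it by injectivity of $E(R_R)$ to an endomorphism $\widetilde{\lambda}_u$ of $E(R_R)$, and then verify this lift is actually an automorphism of $E(R_R)$: injectivity of $\widetilde{\lambda}_u$ follows from essentiality of $R$ in $E(R)$ (a kernel element $x\neq 0$ would yield a nonzero $xr \in R$ killed by $\lambda_u$), and surjectivity follows because $\widetilde{\lambda}_u(E(R))$ is injective, hence a direct summand of $E(R)$, and already contains the essential submodule $R$. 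Automorphism-invariance of $I$ from (1) then forces $\widetilde{\lambda}_u(I) = uI \subseteq I$, and summing/multiplying over generators yields $TI \subseteq I$.

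For $(3) \Rightarrow (2)$, I would run the symmetric observation in reverse: automorphism-invariance of $R$ implies that any automorphism $\varphi$ of $E(R_R)$ satisfies both $\varphi(R) \subseteq R$ and $\varphi^{-1}(R) \subseteq R$, so $\varphi$ restricts to an automorphism of $R_R$. Under the canonical identification $\End(R_R) \cong R$ (via $f \mapsto f(1)$), this restriction is $\lambda_u$ for some unit $u \in R$. Given an essential right ideal $I$ we have $E(I) = E(R)$, and the $T$-module hypothesis in (3) yields $\varphi(I) = uI \subseteq TI \subseteq I$, showing $I$ is automorphism-invariant.

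The main obstacle I anticipate is the lifting step in $(1) \Rightarrow (3)$: justifying that the extension of an automorphism of $R_R$ to $E(R_R)$ really is an automorphism rather than merely an endomorphism. Once this technical point is carefully recorded, the remaining implications are clean consequences of the identification $\End(R_R) = R$, automorphism-invariance of $R$, and the standard interplay between essential submodules and injective envelopes.
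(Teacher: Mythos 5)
Your proposal is correct and follows essentially the same route as the paper's proof: complement-plus-summand inheritance for $(2)\Rightarrow(1)$, and the identification of units of $R$ with automorphisms of $R_R$ extended to $E(R_R)$ for the equivalence with $(3)$. The lifting detail you carefully verify (that the extension $\widetilde{\lambda}_u$ to $E(R_R)$ is an automorphism) is exactly the step the paper compresses into the remark that $T$ embeds in $\End(E(R))$, so your write-up is just a more explicit version of the same argument.
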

\begin{proof} $(1)\Rightarrow (2)$. This is obvious.

$(2)\Rightarrow (3)$. By the hypothesis, $R$ is a right automorphism-invariant ring. Let $I$ be an essential right ideal of $R$. Then $E(I)=E(R)$. Let $T$ be a subring of $R$ generated by its units. Then $T$ is  a subring of $\End(E(R))$, and so $TI=I$.

$(3)\Rightarrow (2)$. Let $I$ be an essential right ideal of $R$. Then $E(I)=E(R)$. Let $\varphi$ be an automorphism of $E(R)$. As $R$ is right automorphism-invariant, we have $\varphi(R)= R$ which implies that $\varphi(1)$ is a unit of $R$. By the assumption, we have $\varphi(1)I\leq I$ and so $\varphi(I)\leq I$. This shows that each essential right ideal of $R$ is automorphism-invariant.

$(2)\Rightarrow (1)$. Let $A$ be any right of $R$. Let $K$ be a complement of $A$, then $A\oplus K$ is an essential right ideal of $R$. By assumption, $A\oplus K$ is automorphism-invariant. Since every direct summand of an automorphism-invariant module is automorphism-invariant, it follows that $A$ is automorphism-invariant. This proves that $R$ is a right $a$-ring. 
\end{proof}

\begin{cor} Let $R=S\times T$ be a product of rings. Then $R$ is a right $a$-ring if and only if $S$ and $T$ are right $a$-rings.
\end{cor}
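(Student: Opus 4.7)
The plan is to reduce the product situation to the factors by exploiting the central idempotents $e=(1,0)$ and $f=(0,1)$ of $R=S\times T$.

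First I would observe that every right ideal $K$ of $R$ has the form $K=I\times J$ for some right ideals $I\leq S_S$ and $J\leq T_T$. This is because $K=Ke\oplus Kf$, and $Ke$, $Kf$ correspond to right ideals of $S$ and $T$ respectively. Conversely, any such $I\times J$ is a right ideal of $R$. Together with the fact that $E_S(I)$ and $E_T(J)$ are injective as $R$-modules (since $S$ and $T$ are naturally quotient rings and module categories embed), one gets
\[ E_R(I\times J)\;=\;E_S(I)\times E_T(J). \]

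For the forward direction, suppose $R$ is a right $a$-ring and let $I$ be a right ideal of $S$. Then $I\times 0$ is a right ideal of $R$, hence automorphism-invariant. I would check that the natural map sending an $S$-automorphism $\psi$ of $E_S(I)$ to the $R$-automorphism $(\psi,0)$ of $E_S(I)\times 0$ is a bijection (both directions are $S$-linear because $R$-linear maps restrict to $S$-linear maps via $s\mapsto (s,0)$), so $I$ inherits automorphism-invariance over $S$. The argument for $T$ is symmetric.

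For the converse, assume $S$ and $T$ are right $a$-rings and let $K=I\times J$ be a right ideal of $R$. Let $\varphi$ be an automorphism of $E_R(I\times J)=E_S(I)\times E_T(J)$. The key point is that $e=(1,0)$ is a central idempotent in $R$, so $\varphi$ commutes with right multiplication by $e$; hence $\varphi$ preserves $E_S(I)\times 0=(E_S(I)\times E_T(J))\cdot e$ and similarly the other summand. Thus $\varphi$ restricts to $R$-automorphisms, and therefore $S$-automorphisms, of each $E_S(I)$ and $E_T(J)$. By hypothesis these restrictions send $I$ into $I$ and $J$ into $J$, so $\varphi(I\times J)\subseteq I\times J$, and $K$ is automorphism-invariant.

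The main (and only mild) obstacle is verifying cleanly that automorphisms of the product injective envelope preserve the two summands and that the restrictions are genuine $S$- (resp.\ $T$-) automorphisms; this is handled entirely by the central idempotent argument above. Alternatively, one could shortcut by invoking Proposition~\ref{essen} and working only with essential right ideals (replacing $I$ and $J$ by essential closures in $S$ and $T$), but the direct approach seems cleaner.
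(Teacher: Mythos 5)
Your proof is correct. The paper itself gives no argument for this corollary (it is stated as an immediate consequence, placed after Proposition~\ref{essen}), and your direct verification via the central idempotents $e=(1,0)$, $f=(0,1)$ -- every right ideal splits as $I\times J$, injective envelopes and automorphisms decompose accordingly, so automorphism-invariance passes back and forth between $R$ and its factors -- is exactly the standard reasoning being taken for granted; the only cosmetic point worth adding is that in the converse direction $\varphi^{-1}$ also preserves the two summands, so the restrictions of $\varphi$ are genuine automorphisms of $E_S(I)$ and $E_T(J)$ rather than mere monomorphisms.
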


The singular submodule $Z(M)$ of a right $R$-module $M$ is defined as $Z(M)=\{m\in M: ann^{r}_R(m)$ is an essential right ideal of $R\}$ where $ann^r_R(m)$ denotes the right annihilator of $m$ in $R$.
The singular submodule of $R_R$ is called the (right) singular
ideal of the ring $R$ and is denoted by $Z(R_R)$. It is well known that $Z(R_R)$ is
indeed an ideal of $R$.

\begin{lem}\label{key}Let $R$ be a right $a$-ring and $A$ be a right ideal of $R$. If there exists a right ideal $B$ of $R$ with $A\cap B=0$ and $A\cong B$, then:
\begin{enumerate}
\item $A$ is semisimple and injective.
\item $A$ is nonsingular.
\end{enumerate}
\end{lem}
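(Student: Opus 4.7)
The plan is to bootstrap from the automorphism-invariance of $A \oplus B$ (a right ideal of the $a$-ring $R$) to the much stronger conclusions in (1) and (2), using the decomposition theorem of Er, Singh, and Srivastava \cite{ESS}: an automorphism-invariant module of the form $N_1 \oplus N_2$ with $N_1 \cong N_2$ is quasi-injective. Here this gives $A \oplus B$ quasi-injective, and therefore so is its direct summand $A$. The same reasoning applied to the pair $(N, \phi(N))$ for any submodule $N \leq A$, where $\phi \colon A \to B$ denotes a fixed isomorphism, shows $N \oplus \phi(N)$, and hence $N$, is quasi-injective.

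For part (1), I would first prove $A$ is semisimple by showing every submodule is a direct summand. Given $N \leq A$, the ESS characterization of automorphism-invariance---every monomorphism from a submodule extends to an endomorphism---applied to the quasi-injective module $N \oplus \phi(N)$ together with the isomorphism $\phi|_N$ should allow one to produce an idempotent endomorphism of $A$ projecting onto $N$. For injectivity, I would show that each simple summand $S$ of $A$ is a direct summand of $R$. Since $R$ itself is automorphism-invariant (Proposition \ref{essen}), the monomorphism $\phi|_S \colon S \hookrightarrow R$ extends to an endomorphism of $R_R$, given by left multiplication by some $c \in R$; dually one obtains $c' \in R$ with $c'\phi(s) = s$ for every $s \in S$, so that $c'c$ acts as the identity on $S$. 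From this one extracts an idempotent $e \in R$ with $eR = S$, making $S$ a direct summand of $R$ and hence injective; $A$, as a sum of such, is then injective as well.

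For part (2), suppose for contradiction that $Z(A) \neq 0$. Since the singular submodule commutes with direct sums and by (1) $A = \bigoplus_i S_i$ is a direct sum of simples, some $S_i$ satisfies $S_i = Z(S_i) \subseteq Z(R)$. By (1), $S_i$ is injective, and being an injective submodule of $R_R$, it is a direct summand: $S_i = eR$ for some nonzero idempotent $e \in R$. Then $\mathrm{ann}^r_R(e) = (1 - e)R$ is a direct summand of $R_R$, and a direct summand of $R_R$ is essential only if its complementary summand vanishes; since $e \neq 0$, the ideal $(1-e)R$ is not essential, so $e \notin Z(R)$, contradicting $e \in S_i \subseteq Z(R)$.

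The main obstacle is the explicit construction of the splitting idempotents in part (1): although the ingredients---quasi-injectivity via \cite{ESS}, the monomorphism-extension characterization of automorphism-invariance, and the automorphism-invariance of $R$ itself---are all available, chaining them into genuine direct-summand decompositions (of $N$ inside $A$ for semisimplicity, and of $S$ inside $R$ for injectivity) requires careful bookkeeping with the auxiliary elements $c, c' \in R$ produced by the extension property.
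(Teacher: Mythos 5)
Your reduction of the problem to automorphism-invariance of right ideals of the form $N\oplus\phi(N)$ and the use of Lee--Zhou/ESS relative-injectivity is a reasonable start: $A\oplus B$ automorphism-invariant with $A\cong B$ does give $A$ quasi-injective, and your part (2) is essentially the paper's argument (injective simple summand $=eR$ with $e$ idempotent, and a nonzero idempotent cannot lie in $Z(R_R)$). The semisimplicity step, though left vague, is also repairable: for $N\leq A$ the right ideal $N\oplus B$ is automorphism-invariant, so $N$ is $B$-injective, hence $A$-injective (as $A\cong B$), and extending $\mathrm{id}_N$ along $N\hookrightarrow A$ splits $N$ off $A$ directly, with no need to manufacture idempotents by hand.

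The genuine gap is in the injectivity claim, which is the real content of part (1). Quasi-injectivity of $A$ (i.e.\ $A$-injectivity) is far weaker than injectivity, and your route to upgrade it fails at the step ``$S$ is a direct summand of $R$ and hence injective'': a direct summand of $R_R$ is projective, not injective, unless $R$ is right self-injective --- and a right $a$-ring need not be (the eventually-constant-sequences example in Section 2 is an $a$-ring that is not self-injective). The auxiliary elements $c,c'$ with $c'cs=s$ on $S$ also do not by themselves yield an idempotent $e$ with $eR=S$, and even granting that, ``$A$ is a sum of such, hence injective'' would need a noetherian-type hypothesis for infinite direct sums; but the decisive flaw is the invalid inference from summand of $R$ to injective. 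What is actually needed, and what the paper does, is to prove $A$ is $R$-injective: decompose $E(R_R)=E(A)\oplus E(B)\oplus C$, use automorphism-invariance of $R_R$ together with \cite[Lemma 7]{SS} (possible because $E(A)\cong E(B)$) to pull this down to $R_R=(R\cap E(A))\oplus(R\cap E(B))\oplus(R\cap C)$, and then apply the $a$-ring hypothesis to the right ideals $B\oplus(R\cap E(A))$ and $A\oplus\bigl((R\cap E(B))\oplus(R\cap C)\bigr)$; Lee--Zhou's theorem then makes $A$ (via $A\cong B$) injective relative to every summand in this decomposition of $R$, hence $R$-injective, hence injective. Your proposal never reaches relative injectivity with respect to all of $R$, so the injectivity assertion remains unproved.
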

\begin{proof} (1) Let $A$ and $B$ be right ideals of a right $a$-ring $R$ with $A\cap B=0$ and $A\cong B$. Let $D$ be a complement of $A\oplus B$ in $R_R$. Then $(A\oplus B)\oplus D\leq ^e R_R$. It follows that $E((A\oplus B)\oplus D)\leq ^e E(R_R)$. On the other hand,  $E((A\oplus B)\oplus D)$ is a direct summand of $E(R_R)$ and so $E((A\oplus B)\oplus D)=E(R_R)$. We have $E((A\oplus B)\oplus D)=E(A)\oplus E(B)\oplus E(D)$. Thus $E(R_R)=E(A)\oplus E(B)\oplus E(D)$ which means that we have a decomposition $E(R_R)=E(A)\oplus E(B)\oplus C$  for some $C\leq E(R_R)$. Note that $E(A)\cong E(B)$ and $R$ is right automorphism-invariant. By \cite[Lemma 7]{SS}, we get  $$R_R=(R\cap E(A))\oplus (R\cap E(B))\oplus (R\cap C).$$ We also have $B\cap (R\cap E(A))=0$ and $A\cap [(R\cap E(B))\oplus (R\cap C)]=0$. Since $R$ is a right $a$-ring, the modules $B\oplus [R\cap E(A)]$ and $A\oplus [(R\cap E(B))\oplus (R\cap C)]$ are automorphism-invariant. By \cite[Theorem 5]{Lee-Zhou},  $B$ is $[R\cap E(A)]$-injective and $A$ is $[(R\cap E(B))\oplus (R\cap C)]$-injective . Note that $A\cong B$. Thus $A$ is $R$-injective, that is, $A$ is injective. Let $\varphi: A\to B$ be an isomorphism and $U$ be a submodule of $A$. Clearly, $U\cong \varphi(U)$. Let $V=\varphi(U)$. Then $U\cap V=0$ and $U\cong V$. By a similar argument as above, we have that $U$ is an injective module and thus it follows that $U$ is a direct summand of $A$. This proves that $A$ is semisimple.

(2) Let $a$ be an arbitrary element of $Z(A)$. Then $aR$ is an injective module since it is a direct summand of $A$. It follows that $aR=eR$ for some $e^2=e\in R$. Therefore $e\in Z(A)$ and so $e=0$. Thus $a=0$ which shows  $Z(A)=0$.
\end{proof}

\noindent Recall that two modules $M$ and $N$ are said to be {\it orthogonal} if no submodule of $M$ is isomorphic to a submodule of $N$. A module $M$ is said to be a {\it square module} if there exists a right module $N$ such that  $M\cong N^2$ and a submodule $N$ of a module $M$ is called {\it square-root} in $M$ if $N^2$ can be embedded in $M$. A module $M$ is called {\it square-free} if $M$ contains no non-zero  square roots and $M$ is called {\it square-full} if every submodule of $M$ contains a non-zero square root in $M$.

As a consequence of the above lemma, we are now ready to prove a useful decomposition theorem for any right $a$-ring.

\begin{thm}\label{thm:g} A right $a$-ring is a direct sum of a square-full semisimple artinian
ring and a right square-free ring.
\end{thm}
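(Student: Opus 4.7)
The plan is to apply the decomposition theorem for automorphism-invariant modules (Er--Singh--Srivastava, as recalled in the introduction) to $R_R$, which is automorphism-invariant since $R$ itself is a right ideal in the right $a$-ring $R$. This gives a module decomposition $R_R = A \oplus B$ with $A$ quasi-injective and $B$ square-free. The first step is to promote this to a \emph{ring} decomposition: using orthogonality of the two summands (a standard feature of the ESS decomposition) together with $\End(R_R) \cong R$ acting by left multiplication, I would argue that $A$ and $B$ are each two-sided ideals of $R$, so $R = A \oplus B$ as rings.

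The summand $B$ is right square-free as an $R$-module by construction, giving one of the two factors immediately. The substantive work is to show that $A$ is a square-full semisimple artinian ring. The crucial input is Lemma \ref{key}, which says that every right ideal $T$ of $R$ admitting a disjoint isomorphic copy in $R$ is semisimple and injective. Because $B$ is square-free, every such doubled right ideal must sit inside $A$, and in fact $A$ coincides with the sum of all such doubled right ideals. Hence $A$ is a sum of semisimple submodules, so $A$ is semisimple. Writing $A = eR$ for the central idempotent $e$ coming from the ring direct sum, $A$ is cyclic, and a cyclic semisimple module has finite length, so $A$ is semisimple artinian.

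For the square-full property of $A$: given any $0 \neq U \leq A$, I would pick a simple $S \leq U$. Since $S$ lies in some doubled right ideal $T \leq A$ with $T \cong T' \leq R$ and $T \cap T' = 0$, the isomorphism sends $S$ to a simple $S' \leq T'$ isomorphic to $S$. Orthogonality $A \perp B$ together with square-freeness of $B$ force $S' \leq A$ (otherwise $S'$ would project nontrivially to $B$, producing a second copy of $S$ in $B$), so $S \oplus S' \leq A$ is a nonzero square root of $A$ sitting inside $U$, confirming that $A$ is square-full.

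The main obstacle is the opening step --- justifying that the ESS module decomposition refines to a ring decomposition. This requires extracting the orthogonality and consequent full invariance of $A$ and $B$ from the theory of automorphism-invariant modules, so that left multiplications (which generate $\End(R_R)$) preserve each summand. Once this is in place, Lemma \ref{key}, cyclicity of $eR$, and the doubling argument dispose of semisimplicity, artinianness, and the square-full property in turn.
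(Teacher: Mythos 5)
Your overall strategy (ESS decomposition plus Lemma \ref{key}, then Hom-vanishing to upgrade to a ring direct sum) is the same as the paper's, but there is a genuine gap at the decisive step. You start from the two-summand form of the ESS theorem, $R_R=A\oplus B$ with $A$ quasi-injective and $B$ square-free, and then assert that ``$A$ coincides with the sum of all doubled right ideals,'' from which you derive that $A$ is semisimple and square-full. Nothing in the properties you invoke ($A$ quasi-injective, $B$ square-free, $A$ and $B$ orthogonal) forces this: a quasi-injective summand can itself be square-free and contain no doubled right ideals at all. For instance, if $R$ is a commutative self-injective square-free ring (say $R=\Z/4\Z$, which is a $q$-ring and hence an $a$-ring), then $A=R$, $B=0$ is a legitimate decomposition of the kind you use, the sum of doubled right ideals inside $A$ is zero, and your conclusion that $A$ is square-full semisimple artinian is false. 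The theorem is rescued only by choosing the decomposition correctly, i.e.\ by putting all square-free material into the second summand --- and that is exactly the information your chosen form of the ESS theorem does not record.

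The paper avoids this by using the finer statement of \cite[Theorem 3]{ESS}: $R_R=A\oplus B\oplus C$ with $A\cong B$ and $C$ square-free and orthogonal to $A\oplus B$. With that version, Lemma \ref{key} applies directly to the pair $A,B$ (they are disjoint isomorphic right ideals), giving at once that $X:=A\oplus B$ is semisimple and injective; square-fullness of $X$ is immediate because any nonzero submodule of $X$ has a nonzero submodule isomorphic to a submodule of $A$ or of $B$, whose square embeds in $X$; and then $\Hom(X,C)=0=\Hom(C,X)$ follows from orthogonality (using semisimplicity of $X$ in one direction and the splitting of the image in the other), which is the step you sketched for promoting the module decomposition to a ring decomposition. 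So to repair your argument you should replace the quasi-injective/square-free splitting by this three-summand (``doubled part plus square-free part'') splitting; the rest of your outline --- cyclicity giving finite length, and the Hom-vanishing giving two-sided ideals --- then goes through essentially as in the paper.
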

\begin{proof} By \cite[Theorem 3]{ESS}, there exists a decomposition $R_R=A\oplus B\oplus C$ where $A\cong B$ and the module $C$ is square-free which is orthogonal to  $A\oplus B$. Let $X:=A\oplus B$ and $Y:=C$. Now we proceed to show that $X$ is square-full. Let $U$ be a non-zero arbitrary submodule of $X$. There exist either non-zero submodules $U_1$ of $U$ and $V_1$ of $A$ such that $U_1\cong V_1$ or non-zero submodules  $U_2$ of $U$ and  $V_2$ of $B$ such that $U_2\cong V_2$. It follows that  $U_1^2$ or $U_2^2$ can be embedded in $X$. This means $U$ contains a square root in $X$ and hence $X$ is square-full.

By Lemma \ref{key}, $A$ and $B$ are injective semisimple modules and so $X$ is injective and semisimple. Next we show that $X$ and $Y$ are ideals of $R$. Let $f$ be a nonzero homomorphism from $X$ to $Y$. As $X$ is semisimple, $\Ker(f)$ is a direct summand of $X$. So there exists a submodule $L$ of $X$ such that $X=\Ker(f)\oplus L$. As $\Ker(f)\cap L=0$, we have $L\cong f(L) \subseteq Y$, a contradiction to the fact that $X$ is orthogonal to $Y$. Hence, we have $\Hom(X, Y)=0$. Assume that $\varphi: Y\to X$ is a non-zero homomorphism. Then $Y/\Ker(\varphi)\cong \im(\varphi)$ is projective (since $\im(\varphi)$ is a direct summand of $X$). It follows that there exists a non-zero submodule $K$ of $Y$ such that $\Ker(\varphi)\cap K=0$. So $K\cong \varphi(K)$, a contradiction to the orthogonality of $X$ and $Y$. Therefore $\Hom(Y, X)=0$.

Thus $R=X\oplus Y$, where $X$ is a square-full semisimple artinian
ring and $Y$ is a right square-free ring.
\end{proof}

As a consequence of the above, we have

\begin{cor} 
An indecomposable ring $R$ containing a square is a right $a$-ring if and only if $R$ is simple artinian.
\end{cor}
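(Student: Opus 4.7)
The plan is to derive both directions directly from Theorem \ref{thm:g}, together with the standard fact that a simple artinian ring is right self-injective. The forward direction is where the real content lies; the converse is essentially formal.

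For the forward direction, suppose $R$ is an indecomposable right $a$-ring that contains a square (that is, $R_R$ is not square-free). By Theorem \ref{thm:g}, $R$ admits a ring decomposition $R = X \oplus Y$, where $X$ is a square-full semisimple artinian ring and $Y$ is a right square-free ring. Since $R$ is indecomposable as a ring, one of $X$ and $Y$ must be zero. If $X = 0$ then $R = Y$ would be right square-free, contradicting the hypothesis; hence $Y = 0$ and $R = X$ is semisimple artinian. An indecomposable semisimple artinian ring is simple artinian by Artin-Wedderburn, which completes this direction.

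For the converse, if $R$ is simple artinian then $R \cong \mathbb{M}_n(D)$ for some division ring $D$, and every right $R$-module is semisimple. In particular, every right ideal of $R$ is a direct summand of $R_R$, hence injective and a fortiori automorphism-invariant, so $R$ is a right $a$-ring (indeed a right $q$-ring). Simple artinian rings are automatically indecomposable, and the assumption that $R$ contains a square simply forces $n \geq 2$ in $\mathbb{M}_n(D)$; neither of these conditions is used in the argument that $R$ is a right $a$-ring.

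I do not foresee a serious obstacle here; the only point worth checking is that the decomposition in Theorem \ref{thm:g} is genuinely a ring-theoretic direct product, so that indecomposability of $R$ as a ring forces one summand to vanish. This is already guaranteed by the proof of that theorem, where $X$ and $Y$ are shown to be two-sided ideals via the vanishing of $\Hom(X,Y)$ and $\Hom(Y,X)$. Given this, the corollary reduces to the Artin-Wedderburn observation above.
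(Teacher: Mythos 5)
Your proof is correct and matches the paper's intent: the corollary is stated there as an immediate consequence of Theorem \ref{thm:g}, and your argument (ring decomposition $R=X\oplus Y$ forced to collapse by indecomposability, plus the trivial converse that in a simple artinian ring every right ideal is a direct summand, hence automorphism-invariant) is exactly that argument spelled out.
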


We denote the ring of $n\times n$ matrices over a ring $R$ by $\mathbb M_n(R)$. In the next theorem we study when matrix rings are right $a$-rings.

\begin{thm}\label{pro:ma} Let $n>1$ be an integer. The following conditions are equivalent for a ring $R$:
\begin{enumerate}
\item $\mathbb{M}_n(R)$ is a right $q$-ring.
\item $\mathbb{M}_n(R)$ is a right $a$-ring.
\item $R$ is semisimple artinian.
\end{enumerate}
\end{thm}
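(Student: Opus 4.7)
The implications $(3) \Rightarrow (1) \Rightarrow (2)$ are routine and I would dispose of them first. For $(3) \Rightarrow (1)$, Morita invariance says that $\mathbb{M}_n(R)$ is semisimple artinian whenever $R$ is, so every right ideal is a direct summand and hence injective, in particular quasi-injective. For $(1) \Rightarrow (2)$ it suffices to recall that quasi-injective modules are automorphism-invariant by definition.

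The substantive direction is $(2) \Rightarrow (3)$. The plan is to show directly that $S := \mathbb{M}_n(R)$ is semisimple by exploiting the fact that a matrix ring of size $n>1$ has a built-in square structure, and then to invoke Lemma \ref{key}. Let $\{e_{ij}\}_{1 \leq i,j \leq n}$ denote the standard matrix units of $S$. Since $n > 1$, the idempotents $e_{11}$ and $e_{22}$ are orthogonal, so $e_{11}S \cap e_{22}S = 0$; and the relations $e_{11} = e_{12}e_{21}$, $e_{22} = e_{21}e_{12}$ give mutually inverse right $S$-module maps between $e_{11}S$ and $e_{22}S$ (explicitly, $x \mapsto e_{21}x$ with inverse $y \mapsto e_{12}y$), so $e_{11}S \cong e_{22}S$. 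Thus the pair $(A,B) = (e_{11}S, e_{22}S)$ satisfies the hypothesis of Lemma \ref{key}, and that lemma yields that $e_{11}S$ is semisimple (and injective).

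The same matrix-unit argument gives $e_{ii}S \cong e_{11}S$ for every $i$, so the Peirce decomposition
\[
S_S \;=\; e_{11}S \oplus e_{22}S \oplus \cdots \oplus e_{nn}S
\]
expresses $S_S$ as a finite direct sum of semisimple modules. Hence $S_S$ is semisimple, so $S = \mathbb{M}_n(R)$ is a semisimple artinian ring. By Morita invariance (or directly, since $R \cong e_{11}Se_{11}$ is a corner of a semisimple artinian ring) $R$ is semisimple artinian, completing the equivalence.

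I do not anticipate any real obstacle once Lemma \ref{key} is in hand. The one subtlety worth flagging is that the hypothesis $n > 1$ is used precisely to produce two disjoint isomorphic right ideals inside $S_S$, which is exactly what Lemma \ref{key} requires; for $n = 1$ no such pair exists in general, and indeed $\mathbb{M}_1(R) = R$ can be a right $a$-ring without being semisimple artinian.
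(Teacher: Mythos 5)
Your proposal is correct, but the crucial implication $(2)\Rightarrow(3)$ is proved by a genuinely different route than the paper's. You apply Lemma \ref{key} to the orthogonal, isomorphic right ideals $e_{11}S$ and $e_{22}S$ of $S=\mathbb{M}_n(R)$ (the matrix-unit isomorphism $x\mapsto e_{21}x$, $y\mapsto e_{12}y$ is fine, and $n>1$ is exactly what produces the disjoint square), conclude each $e_{ii}S$ is semisimple, and read off semisimplicity of $S_S$ from the Peirce decomposition, then descend to $R$ by Morita invariance (or the corner $e_{11}Se_{11}\cong R$). The paper instead argues by contradiction using Proposition \ref{essen}: if $R$ is not semisimple it has a proper essential right ideal $B$, from which one builds the essential right ideal $E$ of $\mathbb{M}_n(R)$ consisting of matrices whose first row lies in $B$, and then left multiplication by the anti-diagonal permutation unit moves an element of $E$ (with a single $1$ in the $(n,n)$ spot) out of $E$, contradicting the fact that in a right $a$-ring essential right ideals are invariant under left multiplication by units. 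Your argument is more structural: it runs through the same machinery that drives Theorem \ref{thm:g} and yields, in passing, that each $e_{ii}S$ is injective, so $S$ is seen directly to be semisimple rather than by ruling out the alternative. The paper's argument is more elementary in its prerequisites, needing only the unit-invariance characterization from Proposition \ref{essen} and an explicit computation, with no appeal to Lemma \ref{key} or to decomposition results. The remaining implications $(3)\Rightarrow(1)\Rightarrow(2)$ are handled the same way in both, and I see no gap in what you wrote.
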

\begin{proof} The implication $(1)\Rightarrow (2)$ is obvious.

\smallskip

$(2)\Rightarrow (3)$. Let $\mathbb{M}_n(R)$ be a right $a$-ring. Assume that $R$ is not semisimple artinian. Then there exists an essential right ideal, say $B$, of $R$ such that $B\neq R$. Define $E:=\{\sum a_{ij}e_{ij}:  a_{1j} \in B, 1\leq j\leq  n $ and $ a_{ij}\in R, 1\leq i,j\leq n \}$ where $e_{ij}$ ($1\leq i,j\leq n$) are the units of $\mathbb{M}_n(R)$. Then clearly $E$ is an essential right ideal of $\mathbb{M}_n(R)$. Consider the unit
$\begin{pmatrix} 0&0&0& \cdots &0 &1\\ 0&0& 0& \cdots &1&0\\ 0 & 0& 0& \cdots &0 &0 \\ \vdots & \vdots & \vdots & \ddots & \vdots & \vdots \\1&0&0& \cdots &0 &0\\  \end{pmatrix}$  of $\mathbb{M}_n(R)$. Then
$$\begin{pmatrix} 0&0&0& \cdots &0 &1\\ 0&0& 0& \cdots &1&0\\ 0 & 0& 0& \cdots &0 &0 \\ \vdots & \vdots & \vdots & \ddots & \vdots & \vdots \\1&0&0& \cdots &0 &0\\  \end{pmatrix}\begin{pmatrix} 0&0&0& \cdots &0 &0\\ 0&0& 0& \cdots &0&0\\ 0 & 0& 0& \cdots &0 &0 \\ \vdots & \vdots & \vdots & \ddots & \vdots & \vdots \\0&0&0& \cdots &0 &1\\  \end{pmatrix}=\begin{pmatrix} 0&0&0& \cdots &0 &1\\ 0&0& 0& \cdots &0&0\\ 0 & 0& 0& \cdots &0 &0 \\ \vdots & \vdots & \vdots & \ddots & \vdots & \vdots \\0&0&0& \cdots &0 &0\\  \end{pmatrix}\not \in E.$$ This yields a contradiction (see Proposition \ref{essen}). Hence, $R$ is semisimple artinian.

\smallskip

$(3)\Rightarrow (1)$. This is obvious.
\end{proof}

The following example shows that there exists automorphism-invariant rings which are not right $a$-rings.

\begin{ex}Let $R=\Z_{p^n}$, where $p$ is a prime and $n>1$. It is well known that  $R$ is self-injective. By \cite[Theorem 8.3]{Ut}, $\mathbb{M}_m(R)$ is right self-injective for all $m> 1$. Thus, for instance, $\mathbb{M}_m(\Z_{p^2})$ is a right automorphism-invariant ring. But $\mathbb{M}_m(\Z_{p^2})$ is not a right $a$-ring for any $m>1$ in the view of above theorem as $\Z_{p^2}$ is not semisimple artinian. This example also shows that being a right $a$-ring is not a Morita invariant property.
\end{ex}

\bigskip

\section{Special classes of right $a$-rings}

\noindent In this section, we will consider some special classes of rings, for example, simple, semiprime, prime and CS and characterize as to when these rings are right $a$-rings. We begin this section with a simple observation.

\begin{lem}\label{lem:2.3}Let $A$ and $B$ be right ideals of a  right $a$-ring $R$ with $A\cap B=0$.  Then the following conditions  hold:
\begin{enumerate}
 \item  If $\varphi: A\to B$ is a nonzero homomorphism, then
    \begin{enumerate}
        \item [$(i)$] $\varphi(A)$ is a semisimple module.
        \item [$(ii)$]  $\varphi(A)$ is simple if $B$ is uniform.
    \end{enumerate}
\item If $e$ is a non-trivial idempotent of $R$ such that $eR(1-e)\ne 0$, then $\Soc(eR)\ne 0$.
\end{enumerate}
\end{lem}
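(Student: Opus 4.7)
I propose to prove the three statements in the order (1)(i), (1)(ii), (2), with each later part reducing to its predecessors and ultimately to Lemma \ref{key}.

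For (1)(i), I would set $K = \Ker(\varphi)$ and, by Zorn's lemma, pick a submodule $K'\leq A$ maximal with respect to $K\cap K' = 0$, so that $K\oplus K' \leq^e A$. The restriction $\varphi|_{K'}$ has kernel $K\cap K' = 0$ and is therefore an isomorphism $K'\cong \varphi(K')$. Since $K'\leq A$ and $\varphi(K')\leq B$, the right ideals $K'$ and $\varphi(K')$ are disjoint, and applying Lemma \ref{key}(1) to this pair gives that $\varphi(K')$ is both semisimple and injective.

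The essential step is then to show $\varphi(K') \leq^e \varphi(A)$. For a nonzero submodule $U\leq \varphi(A)$, set $V = \varphi^{-1}(U)\leq A$, so $K\leq V$. Restricting $K\oplus K'\leq^e A$ to $V$ gives $V\cap(K\oplus K')\leq^e V$, and by the modular law (since $K\leq V$) this equals $K + (V\cap K')$. Quotienting by $K$ (which lies in this submodule) yields
\[
V\cap K' \;\cong\; \bigl(K + (V\cap K')\bigr)/K \;\leq^e\; V/K \;\cong\; U.
\]
Since $U\neq 0$, this forces $V\cap K'\neq 0$, and as $\varphi$ is injective on $K'$ we get $0\neq \varphi(V\cap K') \leq U\cap \varphi(K')$. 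Hence $\varphi(K')\leq^e \varphi(A)$. Being injective, $\varphi(K')$ is a direct summand of $\varphi(A)$, and together with essentiality this forces $\varphi(K') = \varphi(A)$, so $\varphi(A)$ is semisimple. Part (1)(ii) is then immediate: a submodule of a uniform module is uniform, so $\varphi(A)$ is a nonzero uniform semisimple module, hence simple.

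For part (2), I would choose a nonzero element $x = er(1-e)\in eR(1-e)$ and define $\psi\colon (1-e)R \to eR$ by $\psi(s) = xs$; this is a well-defined $R$-homomorphism (as $xs = er(1-e)s \in eR$), it is nonzero since $\psi(1-e) = x$, and $(1-e)R\cap eR = 0$. Applying (1)(i) with $A := (1-e)R$ and $B := eR$ shows $\psi((1-e)R)$ is a nonzero semisimple submodule of $eR$, so $\Soc(eR)\neq 0$.

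The only real technical obstacle is the essentiality argument $\varphi(K')\leq^e \varphi(A)$, which hinges on the interaction of essential extensions with the modular law and with quotients by the kernel $K$; everything else is a direct application of Lemma \ref{key} together with standard facts about semisimple, uniform and injective modules.
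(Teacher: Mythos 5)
Your reduction to Lemma \ref{key} via the pair $(K',\varphi(K'))$ is fine, and your parts (1)(ii) and (2) are fine (part (2) is exactly the paper's argument), but the proof of (1)(i) has a genuine gap at the step you yourself flag as the crux. From $K+(V\cap K')\leq^e V$ you pass to $\bigl(K+(V\cap K')\bigr)/K\leq^e V/K$, but essentiality is \emph{not} preserved by passing to quotients (it is preserved under preimages/intersections): e.g.\ $2\Z\leq^e \Z$ while $2\Z/2\Z=0$ is not essential in $\Z/2\Z$. What you actually need is precisely that $V\cap K'\neq 0$ for every $V$ with $\Ker(\varphi)\subsetneq V\leq A$, i.e.\ that $\Ker(\varphi)$ is a \emph{closed} (complement) submodule of $A$ -- and nothing in your argument or in the hypotheses establishes this. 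The assumed principle is false exactly in the problematic case: if $\Ker(\varphi)$ is essential in $A$ (which is not ruled out, since a right $a$-ring need not be right nonsingular and $\varphi(A)\cong A/\Ker(\varphi)$ could a priori be singular), then $K'=0$, $\varphi(K')=0$, and your method proves nothing about $\varphi(A)\neq 0$; in particular the intended conclusion $\varphi(K')=\varphi(A)$ would be false. So as written the proof of (1)(i) does not go through.

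For comparison, the paper's proof of (1)(i) avoids complements of the kernel altogether and proves the stronger containment $\varphi(A)\leq \Soc(B)$: for any essential submodule $U\leq^e B$, the right ideal $A\oplus U$ is automorphism-invariant because $R$ is a right $a$-ring, hence by the Lee--Zhou criterion $U$ is $A$-injective; extending $\varphi$ to $\bar{\alpha}\colon E(A)\to E(B)=E(U)$, relative injectivity forces $\bar{\alpha}(A)\leq U$, so $\varphi(A)\leq U$ for every essential $U$, and therefore $\varphi(A)\leq\Soc(B)$. If you want to salvage your route, you would first have to show that $\Ker(\varphi)$ is closed in $A$ (equivalently, that $\varphi(A)$ has zero singular submodule), which again seems to require an argument of the paper's type rather than pure module-theoretic generalities.
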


\begin{proof}
$(1) (i)$.  Let $U$ be an arbitrary essential submodule of $B$. Then $E(U)=E(B)$ and $U\oplus A$ is automorphism-invariant. It follows that $U$ is $A$-injective. On the other hand, there exists a homomorphism $\bar{\alpha}: E(A)\to E(B)$ such that $\bar{\alpha}|_A=\varphi$. It follows that $\bar{\alpha}(A)\leq U$ and so $\varphi(A)\leq U$. This shows that  $\varphi(A)\leq \Soc(B)$.

$(ii)$. If $B$ is uniform, then from (i), it follows easily that $\varphi(A)$ is simple.

$(2)$. Assume that $eR(1-e)\ne 0$. There exists $r_0\in R$ such that $er_0(1-e)\ne 0$. Consider the homomorphism  $\beta: (1-e)R\to eR$ defined by $\beta((1-e)x)=er_0(1-e)x$. Clearly, $\beta$ is well-defined and $\im(\beta)\ne 0$. By (1)(i), we have $\im(\beta)\leq \Soc(eR)$. Hence $\Soc(eR)\ne 0$.
\end{proof}

\bigskip
\noindent Recall that a ring $R$ is called {\it von Neumann regular} if for every $a\in R$, there
exists some $b\in R$ such that $a = aba$. A ring $R$ is said to be {\it prime} if the product of any two nonzero ideals of $R$ is nonzero and a ring $R$ is called {\it semiprime} if it has
no nonzero nilpotent ideals.

\begin{thm} \label{reg} A right $a$-ring is von Neumann regular if and only if it is semiprime.
\end{thm}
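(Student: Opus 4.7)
The plan is to prove each direction separately. The forward implication — every von Neumann regular ring is semiprime — is straightforward: if $I\lhd R$ satisfies $I^{2}=0$ and $a\in I$, then $a=aba\in IRI\subseteq I^{2}=0$, forcing $I=0$.

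For the converse, let $R$ be a semiprime right $a$-ring. By Theorem \ref{thm:g}, $R=X\oplus Y$ where $X$ is square-full semisimple artinian (hence von Neumann regular) and $Y$ is a right square-free right $a$-ring; as ring summands of a semiprime ring, both are semiprime. The task reduces to showing $Y$ is von Neumann regular.

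The crucial step in $Y$ is to prove every idempotent is central. Suppose $e=e^{2}\in Y$ with $eY(1-e)\neq 0$. Choose $r$ with $er(1-e)\neq 0$ and consider the $Y$-linear map $\beta\colon(1-e)Y\to eY$ defined by $\beta(z)=er(1-e)z$. By Lemma \ref{lem:2.3}(1)(i), $\im\beta$ is semisimple. Let $L$ be a complement of $\Ker\beta$ in $(1-e)Y$; then $\beta|_{L}\colon L\to\beta(L)\subseteq eY$ is an isomorphism, and $L\cap\beta(L)=0$ since these lie in the complementary summands. Lemma \ref{key} gives $L$ semisimple and injective, so $L\oplus\beta(L)\cong L\oplus L$ embeds in the square-free $Y$, forcing $L=0$. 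Hence $\Ker\beta\leq^{e}(1-e)Y$, and $\im\beta$ is a nonzero singular semisimple submodule of $Y$ containing a simple singular right ideal $S$. In the semiprime ring $Y$ no simple right ideal is nilpotent, so $S=fY$ for some idempotent $0\neq f\in S$, and $\mathrm{r.ann}(f)=(1-f)Y$ is a direct summand of $Y$. Singularity of $S$ makes $(1-f)Y$ essential in $Y$, forcing $(1-f)Y=Y$, i.e.\ $f=0$ --- contradiction. A symmetric argument with $1-e$ gives $(1-e)Ye=0$, so $e$ is central.

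It remains to upgrade abelianness to von Neumann regularity. The same template shows that any $a\in Y$ with $a^{2}=0$ has $\mathrm{r.ann}(a)\leq^{e}Y$: if $L$ is a complement of $\mathrm{r.ann}(a)$, then $\mu_{a}|_{L}\colon L\to aL$ is an isomorphism; $aL\subseteq\mathrm{r.ann}(a)$ (because $a^{2}=0$) forces $L\cap aL=0$; Lemma \ref{key} makes $L$ semisimple injective; and $L\oplus aL\cong L\oplus L$ embedding in the square-free $Y$ forces $L=0$. Combining this with the Guil Asensio--Srivastava theorem \cite{GS}, which identifies $J(Y)$ with $Z(Y_{Y})$ for the right automorphism-invariant ring $Y$ and shows $Y/J(Y)$ is von Neumann regular, the remaining task is to prove $J(Y)=0$. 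For $x\in J(Y)$, $xY$ is singular; any simple submodule of $xY$ would be a simple singular right ideal of $Y$, forbidden by the argument in the previous paragraph. Iterating the Lemma \ref{key}/square-free dichotomy on $x$ and its powers then forces $x=0$. The main obstacle I expect lies precisely here: for nilpotent $x$ the key disjointness $L\cap xL=0$ is immediate, but for a general singular element $x$ with $x^{2}\neq 0$ one must combine semiprimeness (to rule out simple singular right ideals) with square-freeness (via Lemma \ref{key} applied to principal right ideals) in a more delicate inductive argument.
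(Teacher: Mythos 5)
Your forward direction and your reduction to showing $J(R)=0$ are fine: by \cite[Proposition 1]{GS}, for a right automorphism-invariant ring $R/J(R)$ is von Neumann regular and $J(R)=Z(R_R)$, so once $J(R)=0$ regularity follows immediately (your detour through Theorem \ref{thm:g} and the centrality of idempotents in the square-free part is correct as far as it goes, but it is not needed for this: $J=0$ plus $R/J$ regular already suffices). The problem is that the one step that actually carries the theorem --- $J(R)=Z(R_R)=0$ for a semiprime right $a$-ring --- is exactly the step you do not prove. Your argument for square-zero elements only shows that such elements are singular, i.e.\ lie in $J(R)$, which is the wrong direction; your observation that $xY$ has no simple singular submodules is vacuous when the socle is zero (and a square-free semiprime right $a$-ring can perfectly well have zero socle); and ``iterating the Lemma \ref{key}/square-free dichotomy on $x$ and its powers'' is not an argument --- for a general $x\in Z(R_R)$ the disjointness $L\cap xL=0$ fails and, as you yourself admit, you have no replacement. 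So the proposal has a genuine gap at its core.

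The missing idea is the one the paper extracts from Proposition \ref{essen}(3): in a right $a$-ring every essential right ideal $E$ is a left module over the subring generated by units, so $E$ absorbs enough left multiplications that from $xE=0$ (with $E=\mathrm{r.ann}(x)$ essential, $x\in Z(R_R)$) one gets $(xRxR)E=0$. Hence for every prime ideal $P$ either $xRxR\subseteq P$ or $E\subseteq P$; intersecting the primes of the semiprime ring $R$ into the two corresponding families $X=\cap_I P_i$ and $Y=\cap_J P_j$ gives $X\cap Y=0$ with $E\subseteq Y$, so $Y$ is essential while $xRxR\subseteq X$, forcing $xRxR=0$ and then $x=0$ by semiprimeness. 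This is a purely two-sided-ideal-theoretic mechanism, quite different from the square-free/orthogonality machinery you are trying to leverage, and without it (or a genuine substitute) the proof does not close.
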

\begin{proof} Let $R$ be a right $a$-ring. If $R$ is von Neumann regular, then it is well known that $R$ is semiprime. Conversely, assume that $R$ is semiprime. As $R$ is a right $a$-ring, in particular, $R_R$ is automorphism-invariant. By \cite[Proposition 1]{GS}, $R/J(R)$ is von Neumann regular and $J(R)=Z(R_R)$. Now we proceed to show that $J(R)=0$. In fact, for any $x\in J(R)$, there exists an essential right ideal $E$ of $R$ such that $xE=0$. Since $R$ is a right $a$-ring, $uE\leq E$ for all units $u$ in $R$ by Lemma \ref{essen}. It follows that $(RxR)E\leq E$ and so $(xRxR)E\leq xE=0$, and so  either $xRxR\leq P$ or $E\leq P$ for all prime ideal $P$ of $R$. Let $\{P_i\}_{i\in I}$ and $\{P_j\}_{j\in J}$ be families of all prime ideals of $R$ such that $xRxR\leq P_i$ for all $i\in I$ and $xRxR\not \leq P_j$ for all $j\in J$. Taking $X=\cap_{i\in I}P_i$ and $Y=\cap_{j\in J}P_j$. Since $R$ is semiprime, $X\cap Y=0$. Moreover, we have $E\leq Y$ and so $Y\leq^e R_R$. If $xRxR\ne 0$, there exists $r_1,r_2\in R$ such that $xr_1xr_2\not= 0$. Then there exists a $y\in R$ such that $xr_1xr_2y\ne 0$ and $xr_1xr_2y\in Y$, a contradiction. Thus $xRxR=0$. Furthermore, as $R$ is semiprime, we have $x=0$. This completes the proof.
\end{proof}

Recall that a ring $R$ is called \textit{directly-finite} if $xy=1$ implies $yx=1$ for all $x,y\in R$. Assume that $R$ is a right $a$-ring. By Theorem \ref{thm:g}, we have a decomposition $R=S\times T$, where $S_S$ is semi-simple artinian and $T_T$ is square-free. Since $S$ and $T$ are directly-finite rings, one infers that the ring $R$ is also directly-finite. Next, we will see that a right $a$-ring is not only directly-finite but it is stably-finite. If for a ring $R$, every matrix ring $\mathbb M_n(R)$ is directly finite then $R$ is called a {\it stably-finite ring}. It is known that the property of stable-finiteness is of importance in topology as well as in the theory of operator algebras.

A ring $R$ is called \textit{right quasi-duo} (left quasi-duo) if every maximal right ideal (every maximal left ideal) is two-sided. The condition that maximal right ideals be two-sided first appeared in the work of Burgess and Stephenson \cite{BS}. This notion has a natural connection to left and right unimodular sequences too (see \cite{LD}). It is still an open problem whether quasi-duo rings are left-right symmetric or not.

\begin{thm} \label{sf}
Every right $a$-ring is stably-finite.
\end{thm}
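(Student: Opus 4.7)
The plan is to reduce via Theorem \ref{thm:g}, which writes $R=S\times T$ with $S$ a square-full semisimple artinian ring and $T$ a right square-free right $a$-ring. Since $\mathbb{M}_n(R)\cong\mathbb{M}_n(S)\times\mathbb{M}_n(T)$, directly-finiteness passes through direct products, and $\mathbb{M}_n(S)$ is itself semisimple artinian (hence directly-finite), the theorem reduces to showing that $\mathbb{M}_n(T)$ is directly-finite for every $n\geq 1$.

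My key step is to show that $T$ is right quasi-duo, from which the conclusion follows by the classical theorem that right quasi-duo rings are stably-finite. To establish that $T$ is right quasi-duo I would prove that $T/J(T)$ is strongly regular and then pull the quasi-duo property back along the canonical surjection $T\twoheadrightarrow T/J(T)$. Three ingredients combine to give the strong regularity of $T/J(T)$. First, because $T$ is right automorphism-invariant (being a right $a$-ring), \cite[Proposition 1]{GS} gives that $T/J(T)$ is von Neumann regular. Second, since automorphism-invariant modules enjoy the exchange property by Guil Asensio--Srivastava, idempotents of $T/J(T)$ lift to idempotents of $T$. Third, and this is the heart of the matter, the square-freeness of $T_T$ combined with Lemma \ref{lem:2.3} forces every idempotent of $T$ to be central: for an idempotent $e$ with $eT(1-e)\neq 0$, Lemma \ref{lem:2.3}(2) yields $\Soc(eT)\neq 0$, and Lemma \ref{lem:2.3}(1)(i) tells us that the nonzero map $\phi\colon(1-e)T\to eT$ defined by left multiplication by an element $a\in eT(1-e)$ must land inside $\Soc(eT)$; selecting a simple summand $\bar S$ of the semisimple image $\phi((1-e)T)$ and using the aut-invariance to split the induced surjection $(1-e)T\twoheadrightarrow\bar S$ produces an embedding $\bar S\hookrightarrow(1-e)T$ that together with $\bar S\subseteq eT$ yields $\bar S\oplus\bar S\hookrightarrow T$, contradicting square-freeness. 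The symmetric case $(1-e)Te\neq 0$ is handled identically. With $T$ abelian and idempotents lifting modulo $J(T)$, the ring $T/J(T)$ becomes abelian regular, i.e., strongly regular.

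Strongly regular rings are right (and left) quasi-duo, since their maximal one-sided ideals are kernels of surjections onto division rings and hence two-sided; pulling such maximal right ideals back along $T\twoheadrightarrow T/J(T)$ (which reflects maximality because $J(T)$ is contained in every maximal right ideal) shows that $T$ is right quasi-duo. Invoking the theorem that right quasi-duo rings are stably-finite then finishes the proof. The main obstacle is producing the splitting of $(1-e)T\twoheadrightarrow\bar S$ in the centrality-of-idempotents step: this requires arguing that $\bar S$ is projective, which ultimately relies on realising $\bar S$ as a direct summand of the socle of the direct summand $eT$ of the automorphism-invariant module $T_T$, using the extra structure available for socles of summands of aut-invariant modules.
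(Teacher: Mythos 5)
Your overall skeleton matches the paper's: decompose $R=S\times T$ via Theorem \ref{thm:g}, show the square-free factor $T$ is right quasi-duo, and deduce stable finiteness (the paper proves the last step inline, mapping $\mathbb{M}_n(T)$ into $\prod_i\mathbb{M}_n(T/M_i)$ with the $M_i$ the two-sided maximal right ideals). The problem is your route to quasi-duo-ness. You propose to prove that \emph{every idempotent of $T$ is central}, and you yourself flag the weak point: the surjection $(1-e)T\twoheadrightarrow \bar S$ onto a simple summand of the socle must split, i.e.\ $\bar S$ must be projective. Being a direct summand of the socle of a direct summand of an automorphism-invariant module gives no projectivity whatsoever, so that step is a genuine gap --- and it cannot be repaired, because the intermediate claim is false. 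Take $R=G_1(D,\Delta,V)=\begin{pmatrix} D & V\\ 0 & \Delta\end{pmatrix}$ as in Theorem \ref{ornn} (equivalently the $q$-rings $G(n;\Delta;P)$ of Beidar--Fong--Ke--Jain): this is an indecomposable, non-semisimple right $a$-ring, hence right square-free by Theorem \ref{thm:g}, yet $e_1R(1-e_1)=V\neq 0$, so $e_1$ is not central. In this very example your map $\phi\colon (1-e_1)R\to e_1R$ is, after unravelling, the canonical surjection $\Delta\to\Delta/P$, which does not split since $P$ is essential in $\Delta$ and $\Delta/P$ does not embed in $\Delta$ --- exactly the failure your argument would need to rule out. (The analogous ``square-free $\Rightarrow$ abelian'' argument used in the proof of Theorem \ref{prime} works only because there the ring is already von Neumann regular, so cyclic images are projective and the splitting is free.)

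What is true, and what the paper uses, is the quasi-duo property itself: the paper cites \cite[Theorem 15]{AS2}, which gives that a right square-free, right automorphism-invariant ring is right quasi-duo; the underlying reason is that for a square-free automorphism-invariant module $M$ the ring $\End(M)/J(\End(M))$ is abelian (strongly) regular, applied to $M=T_T$ so that $T/J(T)$ is strongly regular --- a statement about $T/J(T)$ that does \emph{not} pass through centrality of idempotents of $T$ (and indeed $G_1$ above has $G_1/J(G_1)\cong D\times\Delta/J(\Delta)$ strongly regular while $G_1$ has non-central idempotents). If you replace your centrality step by a citation of that result (or an argument at the level of $T/J(T)$ rather than $T$), the remainder of your proof --- idempotent lifting via the exchange property is then not even needed, quasi-duo follows by pulling maximal right ideals back from $T/J(T)$, and quasi-duo rings are stably finite by the subdirect-product-of-division-rings argument --- goes through and coincides with the paper's proof.
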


\begin{proof}
Let $R$ be a right $a$-ring. Then $R=S\times T$, where $S_S$ is semi-simple artinian and $T_T$ is square-free. By \cite[Theorem 15]{AS2}, $T$ is a right quasi-duo ring\label{qduo}. Then $\mathbb M_n(R)=\mathbb M_n(S\oplus T)$. Thus $\mathbb M_n(R) \cong \mathbb M_n(S)\oplus \mathbb M_n(T)$. Clearly, $\mathbb M_n(S)$ is directly finite. Now we proceed to show that $\mathbb M_n(T)$ is directly finite. Let $\{M_i\}$ be the set of maximal right ideals of the quasi-duo ring $T$. Then each $M_i$ is a two-sided ideal and $J(T)=\cap M_i$. Clearly, each $T/M_i$ is a division ring. Thus $\mathbb M_n(T)/\mathbb M_n(M_i)\cong \mathbb M_n(T/M_i)$ is a simple artinian ring which is clearly directly  finite. Consider the natural ring homomorphism $\varphi: \mathbb M_n(T)\longrightarrow \prod_i{\mathbb M_n(T/M_i)}$. We have $\Ker(\varphi)=\mathbb M_n(J(T))=J(\mathbb M_n(T))$. Since each $\mathbb M_n(T/M_i)$ is directly finite, $\prod_i \mathbb M_n(T/M_i)$ is directly finite and consequently, $\mathbb M_n(T)/J(\mathbb M_n(T))$ is directly finite being a subring of a directly finite ring. Hence $\mathbb M_n(T)$ is directly finite. Thus $\mathbb M_n(R)$ is directly finite and therefore $R$ is stably-finite.
\end{proof}

A ring $R$ is called {\it unit-regular} if, for every element $x\in R$, there exists a unit $u \in  R$ such that $x = xux$. We can now have the following result.

\begin{cor} \label{vnr}
Every von Neumann regular right $a$-ring is unit-regular.
\end{cor}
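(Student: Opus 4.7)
The plan is to invoke the decomposition $R = S \times T$ of Theorem \ref{thm:g}, reduce the problem to the right square-free factor $T$, and then show that $T$ is strongly regular. Since $S$ is semisimple artinian, it is automatically unit-regular, and a finite product of unit-regular rings is unit-regular, so this reduction is harmless.

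For the right square-free factor $T$, I would reuse the key input exploited in the proof of Theorem \ref{sf}: being right square-free, $T$ is right quasi-duo (by \cite[Theorem 15]{AS2}). Hence every maximal right ideal $M_i$ of $T$ is two-sided, and consequently $T/M_i$ is a simple module over itself and therefore a division ring. Because $R$, and therefore $T$, is von Neumann regular, $J(T) = \bigcap_i M_i = 0$, so the natural ring homomorphism $T \hookrightarrow \prod_i T/M_i$ is injective. This exhibits $T$ as a subring of a product of division rings, which is a reduced ring; since subrings of reduced rings are reduced, $T$ is reduced.

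Finally, I would close with the standard fact that a reduced von Neumann regular ring is strongly regular (equivalently, every idempotent is central), and that every strongly regular ring is unit-regular. Applied to $T$, this yields unit-regularity of $T$, and combined with the unit-regularity of $S$, gives that $R = S \times T$ is unit-regular.

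I do not anticipate a serious obstacle, since the structural work has already been carried out in Theorems \ref{thm:g} and \ref{sf}. The one place requiring a touch of care is verifying that each $T/M_i$ is truly a division ring, which uses that $M_i$ is simultaneously maximal as a right ideal and two-sided. After this, the embedding of $T$ into a product of division rings and the passage from reduced von Neumann regular to strongly regular to unit-regular are entirely standard.
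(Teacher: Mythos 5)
Your proof is correct, and every step checks out: the factor $T$ in the decomposition $R=S\times T$ of Theorem \ref{thm:g} is again a von Neumann regular right $a$-ring, \cite[Theorem 15]{AS2} makes it right quasi-duo, regularity gives $J(T)=\bigcap_i M_i=0$, the embedding into $\prod_i T/M_i$ shows $T$ is reduced, and reduced regular rings are strongly regular, hence unit-regular; together with the unit-regularity of the semisimple artinian factor $S$ this gives the claim. The paper itself prints no proof at all and places the corollary right after Theorem \ref{sf}, as if it dropped out of stable finiteness; but for von Neumann regular rings stable finiteness does not in general imply unit-regularity, so an argument like yours, built from the decomposition of Theorem \ref{thm:g} together with the square-free/quasi-duo input already used in the proof of Theorem \ref{sf}, is exactly what is needed and is surely what the authors had in mind. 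A marginally shorter variant, in the spirit of the proof of Theorem \ref{prime}, bypasses the quasi-duo and reduced-ring steps: since $T$ is right square-free, all idempotents of $T$ are central, so the regular ring $T$ is abelian regular, i.e.\ strongly regular, hence unit-regular. Your route and this one use the same structural facts; yours has the mild extra care of checking that each $T/M_i$ is a division ring (which you did correctly, using that $M_i$ is both maximal as a right ideal and two-sided), while the other leans on the centrality of idempotents in square-free rings asserted in the proof of Theorem \ref{prime}.
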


\begin{cor} The ring of linear transformations $R:=\End(V_D)$ of a vector space $V$  over a division ring $D$  is a right $a$-ring if and only if the vector space is finite-dimensional.
\end{cor}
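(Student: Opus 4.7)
The plan is to handle the two implications separately, each reducing to a result already established in the paper together with a standard fact about endomorphism rings of vector spaces.

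For the implication \emph{$V_D$ finite-dimensional $\Rightarrow$ $R$ is a right $a$-ring}: if $\dim_D V = n < \infty$, then $R \cong \mathbb{M}_n(D)$. When $n=1$, $R = D$ is a division ring, whose only right ideals are $0$ and $R$ itself, both trivially automorphism-invariant. When $n > 1$, since the division ring $D$ is semisimple artinian, the equivalence $(3)\Rightarrow (2)$ of Theorem \ref{pro:ma} gives that $\mathbb{M}_n(D)$ is a right $a$-ring.

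For the converse \emph{$R$ is a right $a$-ring $\Rightarrow$ $V_D$ is finite-dimensional}, the key observation is that Theorem \ref{sf} shows every right $a$-ring is stably-finite, and in particular directly-finite. Hence it suffices to invoke the classical fact that $\End(V_D)$ is directly-finite if and only if $\dim_D V < \infty$. To make this self-contained one can argue the contrapositive: if $V$ is infinite-dimensional, pick a countably infinite linearly independent set $\{v_n\}_{n\geq 1}$ in $V$, extend to a basis, and define $f,g \in R$ by $f(v_i) = v_{i+1}$, $g(v_1)=0$, $g(v_{i+1})=v_i$ on this subsequence (and acting as the identity on the complementary basis vectors). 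Then $gf = 1_V$ while $fg$ kills $v_1$, so $fg \ne 1_V$, contradicting the direct-finiteness of $R$. Therefore $V_D$ is finite-dimensional.

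There is no real obstacle here: the forward direction is essentially an application of Theorem \ref{sf}, and the backward direction is an application of Theorem \ref{pro:ma}. The only place requiring care is the case $n=1$ in the backward direction, which falls outside the hypothesis $n>1$ of Theorem \ref{pro:ma} and must be handled by the trivial observation that a division ring is a right $a$-ring.
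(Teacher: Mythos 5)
Your proof is correct and takes essentially the same route as the paper: the paper's own argument is precisely that an infinite-dimensional $V$ makes $\End(V_D)$ fail to be directly-finite, contradicting Theorem \ref{sf}, with the finite-dimensional direction left implicit (a matrix ring over a division ring being semisimple artinian, hence a right $a$-ring, as you note via Theorem \ref{pro:ma} and the trivial $n=1$ case). The only blemish is that your closing paragraph swaps the labels ``forward'' and ``backward'' relative to your earlier discussion, which does not affect the mathematics.
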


\begin{proof}
If $V$ is an infinite-dimensional vector space over $D$ then $\End(V_D)$ is not directly-finite. So the result follows from Theorem \ref{sf}.
\end{proof}

\noindent A ring $R$ is said to be {\it strongly regular} if for every $a\in R$,
there exists some $b\in R$ such that $a = a^2b$.

\begin{prop}Let $R$ be a semi-prime right $a$-ring with zero socle. Then $R$ is strongly
regular.
\end{prop}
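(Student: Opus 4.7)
The plan is to reduce the statement to two well-known pieces of structure: that $R$ is von Neumann regular, and that every idempotent of $R$ is central. Once both are in hand, the classical equivalence \emph{abelian $+$ von Neumann regular $\Longleftrightarrow$ strongly regular} closes the argument.

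First, since $R$ is a semi-prime right $a$-ring, Theorem~\ref{reg} immediately gives that $R$ is von Neumann regular; hence for every $a\in R$ there is $b\in R$ with $a=aba$. Next, I would show $R$ is abelian using Lemma~\ref{lem:2.3}(2) together with the socle hypothesis. Let $e=e^{2}\in R$ be a non-trivial idempotent. If $eR(1-e)\neq 0$, then Lemma~\ref{lem:2.3}(2) forces $\Soc(eR)\neq 0$. But every simple submodule of $eR$ is a simple submodule of $R_R$, so $\Soc(eR)\subseteq \Soc(R_R)=0$, a contradiction. Hence $eR(1-e)=0$ for every idempotent $e$. Applying the same argument to the complementary idempotent $1-e$ (which is again non-trivial) yields $(1-e)Re=0$. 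Therefore $eR=eRe=Re$ for every idempotent $e$, i.e., every idempotent of $R$ is central.

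To finish, take any $a\in R$ and choose $b\in R$ with $a=aba$. Then $f:=ab$ is an idempotent, hence central by the previous step, so
\[
a \;=\; aba \;=\; (ab)a \;=\; a(ab) \;=\; a^{2}b,
\]
which is the defining condition of strong regularity.

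The substantive step is the middle one: converting the hypothesis $\Soc(R_R)=0$ into the centrality of idempotents. This is really the only place the zero-socle assumption enters, and the paper has conveniently packaged the needed mechanism into Lemma~\ref{lem:2.3}(2); the rest of the argument is the standard passage from abelian von Neumann regular to strongly regular, which is essentially bookkeeping.
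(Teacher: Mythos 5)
Your proof is correct and follows essentially the same route as the paper: Theorem~\ref{reg} gives von Neumann regularity, Lemma~\ref{lem:2.3}(2) plus $\Soc(R_R)=0$ forces every idempotent to be central, and abelian regular rings are strongly regular. The only cosmetic difference is that you obtain centrality by applying the lemma to both $e$ and $1-e$, whereas the paper applies it once and invokes \cite[Lemma 2.33]{G2} to pass from the one-sided vanishing $(1-e)Re=0$ to centrality.
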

\begin{proof} Assume that $R$ is a semi-prime right $a$-ring. Clearly, $R$ is von Neumann regular. Let $e$ be an idempotent in $R$. Suppose $(1-e)Re\ne 0$. Then $Soc((1-e)R)\ne 0$, a contradiction. Hence $(1-e)Re=0$ and this shows that $e$ is a central idempotent (see \cite[Lemma 2.33]{G2}). Because every idempotent of $R$ is central, $R$ is strongly regular.
\end{proof}

\begin{thm} \label{prime}
Let $R$ be a prime ring. Then $R$ is a right $a$-ring if and only if $R$ is a simple artinian ring.
\end{thm}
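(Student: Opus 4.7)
The plan is to prove the nontrivial direction; the converse is clear, since a simple artinian ring is semisimple artinian, so every right ideal is injective and hence automorphism-invariant, and simple rings are prime. For the forward direction, first apply Theorem \ref{thm:g} to obtain a ring decomposition $R=X\oplus Y$ with $X$ a square-full semisimple artinian ring and $Y$ a right square-free ring. Since prime rings admit no nontrivial central idempotents (if $e$ were one, then $eR\cdot(1-e)R=0$ would contradict primeness), one of the factors must vanish. If $Y=0$ then $R=X$ is prime and semisimple artinian, hence simple artinian by Wedderburn--Artin, and we are done.

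The main case is $R=Y$, so that $R_R$ is square-free. Since $R$ is semiprime, Theorem \ref{reg} gives that $R$ is von Neumann regular. I claim $R$ has no nontrivial idempotents; once this is shown, $R$ must be a division ring (for $a\neq 0$ write $a=aba$; then $ab$ and $1-ba$ are idempotents, so $ab=1$ and $ba=1$), which is simple artinian. Suppose for contradiction that $e\in R$ is a nontrivial idempotent. Primeness forces $eR(1-e)\neq 0$: otherwise a direct computation shows $ReR\cdot R(1-e)R=0$, contradicting primeness since each factor is nonzero. Lemma \ref{lem:2.3}(2) then gives $\Soc(eR)\neq 0$ and, by symmetry, $\Soc((1-e)R)\neq 0$. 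Since $R$ is von Neumann regular, every nonzero right ideal contains a nonzero idempotent, so choose primitive idempotents $e_1=ee_1$ and $f_1=(1-e)f_1$ with $e_1R\subseteq \Soc(eR)$ and $f_1R\subseteq \Soc((1-e)R)$; both $e_1R$ and $f_1R$ are simple right $R$-modules, and $e_1R\cap f_1R=0$.

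Using primeness again, $e_1Rf_1\neq 0$, so pick $r\in R$ with $e_1rf_1\neq 0$; then the map $\phi\colon f_1R\to e_1R$ defined by $\phi(y)=(e_1rf_1)y$ is a nonzero homomorphism between simple modules, hence an isomorphism. Consequently $e_1R\oplus f_1R$ is a submodule of $R_R$ isomorphic to $(e_1R)^{2}$, contradicting the square-freeness of $R$. The main obstacle is precisely this last step: converting the ring-theoretic primeness hypothesis into a module-theoretic isomorphism of minimal right ideals that sit in complementary corners $eR$ and $(1-e)R$, in order to produce a square that square-freeness forbids. The other ingredients (the structure dichotomy from Theorem \ref{thm:g}, regularity from Theorem \ref{reg}, and the socle lemma) are then applied in a fairly mechanical way to force $e$ to be trivial, finishing the argument.
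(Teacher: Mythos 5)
Your proof is correct and follows the same skeleton as the paper's: the dichotomy from Theorem \ref{thm:g} (simple artinian versus square-free), then Theorem \ref{reg} to get von Neumann regularity in the square-free case, then the conclusion that $R$ is a division ring. The only divergence is the endgame. The paper disposes of the square-free case by quoting that a square-free (automorphism-invariant) ring has all idempotents central, so $R$ is strongly regular, and a prime strongly regular ring is a division ring. You instead show directly that a nontrivial idempotent $e$ is impossible: primeness forces $eR(1-e)\neq 0$ and $(1-e)Re\neq 0$, Lemma \ref{lem:2.3}(2) and regularity produce simple right ideals $e_1R\subseteq eR$ and $f_1R\subseteq (1-e)R$, and primeness again yields a nonzero (hence iso) map between them, manufacturing a square inside $R_R$. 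This is a slightly longer but fully self-contained and more elementary finish; it in effect reproves, in the prime regular setting, the ``square-free implies idempotents central'' fact that the paper cites. Both routes are sound.
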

\begin{proof} Assume that $R$ is a prime right $a$-ring. In view of Theorem \ref{thm:g}, we obtain that either $R$ is a simple artinian ring or $R$ is a square-free ring. So, it suffices to consider the case that $R$ is a square-free prime right $a$-ring. By Theorem \ref{reg}, $R$ is a von Neumann regular ring. Since $R$ is square-free, all idempotents of $R$ are central and hence $R$ is a strongly regular ring. Now as every prime strongly regular ring is a division ring, the result follows.
\end{proof}

In particular, from the above theorem it follows that every simple right $a$-ring is artinian.
\bigskip

\noindent A module $M$ is called a {\it CS module} if every submodule of $M$ is essential in a direct summand of $M$ \cite{MM}. And a module $M$ is called a {\it weak CS module} if every semisimple submodule of $M$ is essential in a direct summand of $M$ \cite{Smith}. It is shown in \cite{ESS} that any automorphism-invariant module $M$ satisfies the C2 property. Now if we assume, in addition, that $M$ is a CS module, then $M$ is a continuous module and hence $M$ is invariant under any idempotent endomorphism of $E(M)$. Since $E(M)$ is a clean module, being an injective module, any endomorphism of $E(M)$ is a sum of an idempotent endomorphism and an automorphism. Thus, a CS automorphism-invariant module $M$ is invariant under any endomorphism of $E(M)$ and consequently it is quasi-injective. This means that if $R$ is a right CS, right automorphism-invariant ring then $R$ is right self-injective. Therefore, we consider a weaker condition in the next proposition.

\begin{prop} Let $R$ be a right weak CS right $a$-ring. If $e$ is a primitive idempotent of $R$ such that $eR(1-e)\ne 0$, then $eRe$ is a division ring and $eR(1-e)$ is the only proper $R$-submodule of $eR$.
\end{prop}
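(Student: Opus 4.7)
The plan is to first establish that $eR$ is a uniform module with a simple essential socle containing $eR(1-e)$, then to identify this socle with $eR(1-e)$ as an $R$-submodule (the technical heart of the argument), and finally to deduce that $eR(1-e)$ is the unique proper submodule and that $eRe$ is a division ring. Throughout I use freely that summands of automorphism-invariant and of weak CS modules inherit these properties. First, since $e$ is primitive and $R_R$ is automorphism-invariant and weak CS, the summand $eR$ is indecomposable, automorphism-invariant, and weak CS. Lemma \ref{lem:2.3}(2) applied to $eR(1-e) \ne 0$ gives $\Soc(eR) \ne 0$; in an indecomposable weak CS module, any simple submodule must be essential in the whole module, so $\Soc(eR)$ is simple and essential in $eR$, and $eR$ is uniform. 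By Lemma \ref{lem:2.3}(1)(ii) applied to $A = (1-e)R$ and $B = eR$ (with $B$ uniform), every nonzero homomorphism $(1-e)R \to eR$ has image equal to $\Soc(eR)$. Applied to left multiplication by $er$ for any $r$ with $er(1-e) \ne 0$, this yields $er(1-e) R = \Soc(eR)$, and hence $eR(1-e) \subseteq \Soc(eR)$ as additive subgroups.

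The key step is to show $\Soc(eR) \cap eRe = 0$; combined with the additive decomposition $eR = eRe \oplus eR(1-e)$, this forces $\Soc(eR) = eR(1-e)$ and thereby makes $eR(1-e)$ an $R$-submodule. I plan to argue by contradiction. Suppose $z = ere \in \Soc(eR) \cap eRe$ is nonzero. Then $zR = \Soc(eR)$ (a nonzero submodule of the simple socle), and the left-multiplication endomorphism $\mu_z : eR \to eR$ has image $\Soc(eR) \subsetneq eR$. If $\mu_z$ were injective, then extending to the indecomposable injective envelope $E(eR)$ would produce an automorphism $\bar{\mu}_z$ of $E(eR)$; applying automorphism-invariance of $eR$ to both $\bar{\mu}_z$ and $\bar{\mu}_z^{-1}$ would force $eR \subseteq \bar{\mu}_z(eR) = \Soc(eR)$, collapsing $eR$ to a simple module and contradicting the nontrivial setting. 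Hence $\mu_z$ is not injective; uniformity of $eR$ then gives $\Soc(eR) \subseteq \ker(\mu_z)$, so $z \cdot \Soc(eR) = 0$ and in particular $z^2 = 0$. The main obstacle in the whole proof is the final push from $z^2 = 0$ to $z = 0$: I expect to exploit that $1 + z$ is a unit of $R$, together with Proposition \ref{essen}(3) applied to the essential right ideal $\Soc(eR) \oplus (1-e)R$ and auxiliary units of the form $1 + c$ with $c$ a nilpotent element drawn from $eR(1-e)Re$ or $(1-e)Re$, to drive the required contradiction.

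With $\Soc(eR) = eR(1-e)$ identified as a submodule, uniqueness as the only proper $R$-submodule of $eR$ follows by a parallel argument: any nonzero proper $K$ contains $\Soc(eR)$ by uniformity, and a strict containment $K \supsetneq \Soc(eR)$ can be ruled out by the same injective-envelope/automorphism-invariance analysis used in the key step. For the division-ring conclusion, take a nonzero $s \in eRe$; the endomorphism $\mu_s$ has image $sR$ either equal to $eR(1-e)$ or to $eR$. If $sR = eR$, then $s$ has a right inverse in the local ring $eRe = \End_R(eR)$ (local since $eR$ is automorphism-invariant and indecomposable), hence $s$ is a unit. The case $sR = eR(1-e)$ would force an isomorphism $eR/eR(1-e) \cong eR(1-e)$ of simple right $R$-modules, which is impossible because $(1-e)e = 0$ places $e$ in the right annihilator of $eR(1-e)$, while $eRe \not\subseteq eR(1-e)$ places $e$ outside the right annihilator of $eR/eR(1-e)$; the two simple modules thus have distinct annihilators and cannot be isomorphic. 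Consequently every nonzero element of $eRe$ is a unit, and $eRe$ is a division ring.
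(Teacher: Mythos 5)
Your opening reductions agree with the paper and are sound: $eR$ is an indecomposable automorphism-invariant weak CS summand, $\Soc(eR)\neq 0$ by Lemma \ref{lem:2.3}(2), the socle is simple and essential so $eR$ is uniform, and $eR(1-e)\subseteq\Soc(eR)$. But the argument then stops exactly at what you yourself call the technical heart. You reduce everything to showing that a nonzero $z=ere\in\Soc(eR)\cap eRe$ cannot exist, get as far as $z^{2}=0$ and $z\,\Soc(eR)=0$, and then write that you ``expect to exploit'' units of the form $1+c$ and Proposition \ref{essen}(3) to finish. That step is never carried out, so the central identification $\Soc(eR)=eR(1-e)$ is not proved, and the uniqueness and division-ring conclusions built on it are left hanging. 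There is a second, smaller gap in the same step: your dichotomy (``$\mu_z$ injective forces $eR\subseteq\Soc(eR)$, contradicting the nontrivial setting'') silently assumes $\Soc(eR)\neq eR$. That assumption is not free: for $R=\mathbb{M}_2(k)$ and $e=e_{11}$ all the stated hypotheses hold, $eR(1-e)\neq 0$, and yet $eR$ \emph{is} simple; so the case $eR=\Soc(eR)$ has to be excluded by an actual argument (in effect, by first splitting off the square-full semisimple block via Theorem \ref{thm:g}), not dismissed.

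The idea you are missing is the paper's use of $eJ(R)e$. Since $R$ is right automorphism-invariant, $J(R)=Z(R_R)$ by \cite{GS}, which gives $J(R)\Soc(eR)=0$; combined with $eR(1-e)\subseteq\Soc(eR)$ this yields $eJ(R)e\cdot R=eJ(R)e(Re+R(1-e))=eJ(R)eRe\subseteq eJ(R)e$, so $eJ(R)e$ is an $R$-submodule of $eR$. It cannot contain $\Soc(eR)$ (else $eR(1-e)=\Soc(eR)(1-e)\subseteq eJ(R)e(1-e)=0$), so it meets the essential simple socle trivially and hence $eJ(R)e=0$. Because $eR$ is uniform and automorphism-invariant it is quasi-injective, so $eRe\cong\End(eR)$ is local and every proper submodule $I$ of $eR$ lies in $eJ(R)$; then $Ie\subseteq eJ(R)e=0$ forces $I=I(1-e)\subseteq eR(1-e)\subseteq\Soc(eR)$. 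This one computation delivers simultaneously your missing ``$z=0$'' (i.e.\ $\Soc(eR)e=0$), the uniqueness of the proper submodule, and the division-ring conclusion ($eRe$ local with $J(eRe)=eJ(R)e=0$). Without this, or some genuine substitute for it, your plan does not close; the concluding paragraph of your proposal (annihilator comparison of the two simple modules, right-invertibility in a local ring) is fine, but it all rests on the unproved key step.
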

\begin{proof} By Lemma \ref{lem:2.3}, $\Soc(eR)\ne 0$. Since $R$ is right automorphism-invariant, $R$ is right C2 by \cite{ESS}. By \cite[Theorem 1.4]{Er}, $eR$ is also a weak CS module. Firstly, we show that $\Soc(eR)$ is a simple module which is essential in $eR$. Since $eR$ is a weak CS module, $\Soc(eR)$ is essential in a direct summand of $eR$. But $eR$ is an indecomposable module which implies that $\Soc(eR)$ is essential in $eR$. For any nonzero arbitrary element $a\in \Soc(eR)$, we obtain that $aR$ is essential in $eR$ (because $eR$ is an indecomposable  weak CS module). It follows that $\Soc(eR)\leq aR$ and so $\Soc(eR)= aR$. Thus $\Soc(eR)$ is a simple module. Therefore $eR$ is uniform. Since a uniform automorphism-invariant module is quasi-injective, $eR$ is quasi-injective. Thus $eRe\cong \End(eR)$ is a local ring, that is, $e$ is a local idempotent of $R$.

Next we show that $eR(1-e)$  is the only proper submodule of $eR$. Since $eR(1-e)\ne 0$, one infers $eR(1-e)\subset \Soc(eR)$ by Lemma \ref{lem:2.3}. Hence  $$eR(1-e)=\Soc(eR)(1-e).$$ We next show that $eJ(R)e$ is a submodule of $eR$. Since $R$ is right automorphism-invariant, $J(R)=Z(R_R)$ by \cite[Proposition 1]{GS} and so $J(R)\Soc(eR)=0$. Now  $(eJ(R)e)\Soc(eR)=  eJ(R)\Soc(eR)= 0$ and so $(eJ(R)e)(eR(1-e))=0$. On the other hand, we have  $$eJ(R)eR=  eJ(R)e(Re+R(1-e))=eJ(R)eRe\subset eJ(R)e.$$ Hence $eJ(R)e$ is an $R$-submodule of $eR$. Since $\Soc(eR)$ is simple, we have  $eJ(R)e\cap \Soc(eR)=0$ or $\Soc(eR)\leq eJ(R)e$. Suppose $\Soc(eR)\leq eJ(R)e$. Then $eR(1-e)=\Soc(eR)(1-e)\leq eJ(R)e(1-e)=0$, a contradiction. It follows that  $eJ(R)e\cap \Soc(eR)=0$. Thus $eJ(R)e=0$.

Let $I$ be a proper submodule of $eR$. Since $eR$ is local,  $I\leq eJ(R)$ and so $Ie=0$. On the other hand, we have $I(1-e)\leq eR(1-e)$ which implies that $I\leq eR(1-e)=\Soc(eR)$. Thus $I=0$ or $I=\Soc(eR)$. In particular, we have $\Soc(eR)e=0$. Therefore $eR(1-e)=\Soc(eR)(1-e)=\Soc(eR)$.
\end{proof}

As a consequence, we have the following.

\begin{thm} \label{baba} Let $R$ be an indecomposable, non-local ring. The following conditions are equivalent:
\begin{enumerate}
\item $R$ is a right $q$-ring.
\item $R$ is a right CS right $a$-ring.
\end{enumerate}
\end{thm}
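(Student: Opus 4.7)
\textit{Plan.} The implication $(1)\Rightarrow(2)$ is essentially formal. Every quasi-injective module is invariant under all endomorphisms of its injective envelope, hence a fortiori under all automorphisms; so every right ideal of a right $q$-ring is automorphism-invariant, which makes $R$ a right $a$-ring. Moreover every right $q$-ring is right self-injective by \cite{JMS}, and injective modules are CS.

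For $(2)\Rightarrow(1)$, suppose $R$ is indecomposable, non-local, right CS, and a right $a$-ring. I would first upgrade ``right CS $+$ right automorphism-invariant'' to ``right self-injective'' by applying, to $M=R_R$, the argument given in the paragraph immediately preceding the previous proposition: automorphism-invariance supplies the C2 property, CS supplies C1, so $R_R$ is continuous and hence invariant under every idempotent of $\End(E(R_R))$; together with the automorphisms, these generate all of $\End(E(R_R))$ by cleanness of the injective module $E(R_R)$, so $R_R$ is quasi-injective. By \cite{JMS} it then remains to prove that every essential right ideal of $R$ is two-sided.

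I would now invoke the decomposition of Theorem \ref{thm:g} together with indecomposability: either $R$ is simple artinian, or $R$ is right square-free. In the simple artinian case every right ideal is a summand of the semisimple module $R_R$, hence injective, hence quasi-injective, so $R$ is a right $q$-ring directly.

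The heart of the proof is the square-free case: $R$ is indecomposable, non-local, right square-free, right self-injective, and a right $a$-ring. For each primitive idempotent $e$ of $R$ the previous proposition gives the dichotomy $eR(1-e)=0$, or else $eRe$ is a division ring and $eR$ is uniserial of length two with $\Soc(eR)=eR(1-e)$. Using the Peirce decomposition with respect to a family of primitive idempotents $\{e_i\}$ summing to $1$, an essential right ideal $I$ meets every $e_iR$, and by the length-at-most-two structure this forces $\Soc(e_iR)\subseteq I$ whenever $e_iR$ has length two. To conclude that $I$ is two-sided I would write an arbitrary $r\in R$ as $r=\sum_{i,j}e_i r e_j$ and verify $rI\subseteq I$ blockwise: the diagonal pieces $e_i r e_i$ lie in the local ring $e_iRe_i$ and preserve $I\cap e_iR$ by automorphism-invariance of $I$ combined with self-injectivity, while each off-diagonal piece $e_i r e_j$ gives a nonzero homomorphism $e_jR\to e_iR$ whose image lies in $\Soc(e_iR)\subseteq I$ by Lemma \ref{lem:2.3}(1)(ii). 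The main obstacle is precisely this blockwise verification --- coordinating the primitive-idempotent-wise structure supplied by the previous proposition with the global inclusion $rI\subseteq I$ on each Peirce block --- and this is where the hypotheses ``indecomposable'' and ``non-local'' are genuinely used, since without them one cannot rule out a central primitive idempotent whose Peirce corner falls outside the socle-based control.
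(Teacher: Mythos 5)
Your implication $(1)\Rightarrow(2)$ is fine, and your opening reduction of $(2)\Rightarrow(1)$ — CS plus automorphism-invariant gives right self-injective, so by \cite{JMS} it remains to show every essential right ideal is two-sided — is a legitimate framework (the self-injectivity upgrade is exactly the paper's remark preceding the proposition). But the heart of your plan, the blockwise verification that $rI\subseteq I$, is precisely where all the content lies, and you have not supplied it; you even flag it as ``the main obstacle.'' The paper does not attempt this route at all: its proof of $(2)\Rightarrow(1)$ consists of feeding the conclusion of the preceding proposition (for every primitive idempotent $e$ with $eR(1-e)\neq 0$, $eRe$ is a division ring and $eR(1-e)$ is the only proper submodule of $eR$) into Ivanov's characterization of indecomposable non-local $q$-rings, \cite[Theorem 3]{I}, which carries the remaining structural burden.

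Concretely, three steps in your sketch do not go through as written. First, you use a Peirce decomposition with respect to orthogonal primitive idempotents summing to $1$, but $R$ is not assumed artinian or semiperfect, so the existence of such a family needs an argument. Second, the diagonal-block step ``the pieces $e_ire_i$ preserve $I\cap e_iR$ by automorphism-invariance of $I$ combined with self-injectivity'' is not an argument: automorphism-invariance of an essential right ideal $I$ only yields $uI\subseteq I$ for units $u$ (Proposition \ref{essen}), and $e_ire_i$ is not a unit; reducing $e_ire_iI\subseteq I$ to $e_iI\subseteq I$ is circular, since invariance of $I$ under left multiplication by the idempotents $e_i$ is itself an instance of the two-sidedness you are trying to prove, and a general $x\in I$ need not have its Peirce components $e_ix$ in $I$, so $I$ need not split along the blocks. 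Third, primitive idempotents with $e_iR(1-e_i)=0$ are not covered by the preceding proposition, so the ``socle-based control'' (and Lemma \ref{lem:2.3}) is unavailable for those corners, where $I\cap e_iR$ sits inside the unknown local ring $e_iRe_i$. Unless you either prove an Ivanov-type structure theorem yourself or cite \cite[Theorem 3]{I} as the paper does, the proof of $(2)\Rightarrow(1)$ is incomplete.
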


\begin{proof} This follows from previous proposition and \cite[Theorem 3]{I}.
\end{proof}

\section{Structure theorems}

\noindent In this section we would like to describe the structure of right $a$-rings. In the case of right $q$-rings, Byrd \cite{Byrd} and Ivanov (\cite{I}, \cite{I1}) gave a description
of right $q$-rings but their characterizations turned out to be not complete. Finally,
the structure of right $q$-rings was completely described by Beidar et al in \cite{BJ1}.

\begin{thm} $($Beidar, Fong, Ke, Jain, \cite{BJ1}$)$
A right $q$-ring $R$ is isomorphic to a finite direct product of right $q$-rings of the following types:
\begin{enumerate}
\item Semisimple artinian ring.
\item $H(n; D; id_D)$ where $id_D$ is the identity automorphism on division ring $D$.
\item $G(n; \Delta; P)$ where $\Delta$ is a right $q$-ring whose all
idempotents are central.
\item A right $q$-ring whose all idempotents are central.

\end{enumerate}

Here\\

$H(n; D; \alpha)= \left[
\begin{array}{ccccccc}
D & V & 0 & &  &  & 0 \\
0 & D & V & 0 & &  & 0 \\
&  & D & V & 0 &  &  \\
&  &  &  &  &  &  \\
&  &  &  & D & V & 0 \\
 &  &  &  &  & D & V \\
V(\alpha ) & 0 & &  &  &  & D%
\end{array}
\right] $, where $V$ is one-dimensional both as a left $D$-space and a right
$D$-space, $V(\alpha )$ is also a one-dimensional left $D$-space as well as
a right $D$-space with right scalar multiplication twisted by an
automorphism $\alpha $ of $D$, i.e., $vd = v\cdot\alpha(d)$ for all $v\in V$, $d\in D$,
\newline and
$$
G_n(n; \Delta; P):=\begin{pmatrix}
D&V&&&&  \\
&D&V&&&\\
&&D&V&&\\
&&.&.&.&\\
&&&&.&.&\\
&&&&.&D&V\\
&&&&&&\Delta\\
\end{pmatrix},$$
where $V$ is as above and $\Delta$ is a right $q$-ring with maximal essential right
ideal $P$ and hence $D=\Delta/P$ is a division ring.
\end{thm}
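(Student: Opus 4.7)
My plan would be to reduce to indecomposable components and then classify each one. Any right $q$-ring $R$ is right self-injective with every essential right ideal two-sided (\cite[Theorem 2.3]{JMS}), so the Boolean algebra of central idempotents decomposes $R$ as a finite direct product of indecomposable right $q$-rings; finiteness follows from the fact that $R/J(R)$ is von Neumann regular and directly-finite. If an indecomposable factor has all idempotents central, it is already of type $(4)$, or further of type $(1)$ when additionally semisimple artinian.

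For an indecomposable right $q$-ring $R$ containing a non-central idempotent, I would fix a complete set $e_1,\dots,e_n$ of primitive orthogonal idempotents summing to $1$. Each corner $e_iRe_i$ is local with residue division ring $D_i$, and the two-sidedness of every essential right ideal forces each non-zero Peirce bimodule $V_{ij}:=e_iRe_j$ (for $i\ne j$) to be one-dimensional both as a left $D_i$-module and as a right $D_j$-module. Iterating Peirce multiplication and using right self-injectivity, the directed graph on $\{1,\dots,n\}$ with an edge $i\to j$ precisely when $V_{ij}\ne 0$ should reduce to either a single directed cycle, giving $R\cong H(n;D;\alpha)$ and, under the rigidity of the present formulation, $\alpha=\mathrm{id}_D$ (type $(2)$), or to a single directed path whose terminal corner $\Delta:=e_nRe_n$ is itself a right $q$-ring with all idempotents central, with the terminal bimodule $V_n$ satisfying $V_nP=0$ and $\dim(V_n)_{\Delta/P}=1$ for the essential maximal right ideal $P$ of $\Delta$ extracted from the Peirce structure (type $(3)$).

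The main obstacle will be proving that the non-zero link pattern between primitive idempotents is exactly a simple cycle or a simple path, with no branching and no disconnection. Ruling out branching requires combining right self-injectivity with the essential-right-ideal-is-two-sided property in a delicate way, since a priori two different incoming or outgoing edges at a vertex could each carry non-zero bimodule data that would obstruct the matrix shape. This step is the technical heart of \cite{BJ1}; once the graph structure is pinned down, the identification of $\alpha=\mathrm{id}_D$ in the cyclic case, and of the distinguished pair $(\Delta,P)$ in the path case, follows from the Peirce bookkeeping and the description of $\Delta$ as inheriting the right $q$-ring property from $R$ with all idempotents central.
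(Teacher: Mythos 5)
This statement is quoted background: the paper offers no proof of it at all, simply attributing it to Beidar--Fong--Ke--Jain and citing \cite{BJ1}, so there is no internal argument to compare yours against; what can be assessed is whether your sketch would stand on its own, and it would not. Your very first reduction is flawed: a right $q$-ring need not be a finite direct product of indecomposable rings. For instance, an infinite product of fields is commutative and self-injective, hence a $q$-ring, but it has infinitely many central idempotents and no decomposition into finitely many indecomposable factors; note that it is directly finite, so your claim that finiteness ``follows from the fact that $R/J(R)$ is von Neumann regular and directly-finite'' is simply false --- direct finiteness places no bound on the Boolean algebra of central idempotents. The statement of the theorem is designed around exactly this point: the factor of type $(4)$ (all idempotents central) is not required to be indecomposable, and the actual argument in \cite{BJ1} splits off the part of the ring carrying non-central idempotents and shows that \emph{that} part is a finite direct sum of blocks of types $(1)$--$(3)$, rather than atomizing all of $R$.

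Beyond this, the technical core of the theorem --- that the Peirce link pattern among primitive idempotents is forced to be a single simple cycle or a single simple path, the identification of the twist as $\mathrm{id}_D$ in the cyclic case, and the extraction of the pair $(\Delta,P)$ in the path case --- is exactly the part you explicitly defer to \cite{BJ1} (``this step is the technical heart''). Since the conclusion you are asked to establish \emph{is} the theorem of \cite{BJ1}, deferring its heart to that reference means nothing has been proved; the remaining Peirce bookkeeping (one-dimensionality of the $V_{ij}$ from two-sidedness of essential right ideals, as in \cite[Theorem 2.3]{JMS}) is plausible but is scaffolding around a hole. There is also an unaddressed issue in assuming a complete set of primitive orthogonal idempotents exists in an arbitrary indecomposable factor, which again presupposes finiteness conditions the hypotheses do not grant. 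In short: the proposal both rests on an incorrect first reduction and leaves the essential content unproven.
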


Now, using the above defined notations, we give the following description of right $a$-rings.

\begin{thm} \label{ornn} Let $n\ge 1$ be an integer, $D_1, D_2,\dots,D_n$ be division rings and $\Delta$ be a right $a$-ring with all idempotents central and an essential
ideal, say $P$, such that $\Delta/P$ is a division ring and the right $\Delta$-module $\Delta/P$ is not embeddable into $\Delta_{\Delta}$. Next, let $V_i$ be a $D_i$-$D_{i+1}$-bimodule such that
$$dim ({}_{D_i}\{V_i\})=dim(\{V_i\}_{D_{i+1}})=1$$ for all $i=1,2,\dots,n-1$,  and let $V_n$ be a $D_n$-$\Delta$-bimodule such that $V_nP=0$ and $$dim ({}_{D_n}\{V_n\})=dim(\{V_n\}_{\Delta/P})=1.$$
Then $R:= G_n(D_1,\dots,D_n, \Delta, V_1,\dots, V_n)$ is a right $a$-ring.
\end {thm}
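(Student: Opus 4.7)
The plan is to apply Proposition~\ref{essen}(3), reducing to two claims: (a) $R_R$ is right automorphism-invariant, and (b) every essential right ideal $I$ of $R$ is closed under left multiplication by the subring $T$ of $R$ generated by the units of $R$.

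As a preliminary I decompose $R_R$ via the diagonal matrix idempotents $e_i$ (corresponding to position $(i,i)$), so $R_R = \bigoplus_{i=1}^{n+1} e_iR$. For $1 \le i \le n$, $e_iR$ consists of the matrices supported in row $i$, with a $D_i$-entry at $(i,i)$ and a $V_i$-entry at $(i,i+1)$; since $V_i$ is one-dimensional over $D_{i+1}$ (or over $\Delta/P$ when $i = n$) and all other blocks of $R$ annihilate it, the set of matrices whose only nonzero entry lies in $V_i$ at $(i,i+1)$ is a simple right $R$-submodule of $e_iR$. It is the unique proper nonzero right $R$-submodule of $e_iR$: any element with nonzero $(i,i)$-entry generates all of $e_iR$, because $D_i$ is a division ring and $V_i$ is one-dimensional as a left $D_i$-module. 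So $V_i$ is simple and essential in $e_iR$. For $i = n+1$, $e_{n+1}R = \Delta$ as a right $R$-module with $R$ acting through the projection $R \to \Delta$. Consequently every right ideal $I$ of $R$ decomposes as $\bigoplus_i e_iI$, and $I \leq^e R_R$ if and only if $e_iI \in \{V_i, e_iR\}$ for $1 \le i \le n$ and $e_{n+1}I$ is an essential right ideal of $\Delta$.

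For claim (b), observe that $u \in R$ is a unit precisely when $u_{ii}$ is nonzero in $D_i$ for $1 \le i \le n$ and $u_{n+1,n+1}$ is a unit of $\Delta$ (the superdiagonal entries $u_{i,i+1} \in V_i$ being arbitrary). A direct block-matrix calculation of $u\alpha$ for $\alpha \in I$ shows that every entry of $u\alpha$ lies in the appropriate slot of $I$; the only step that is not purely structural is the $(n+1,n+1)$ entry $(u\alpha)_{n+1,n+1} = u_{n+1,n+1}\cdot\alpha_{n+1,n+1}$, which must lie in $e_{n+1}I$. This holds by Proposition~\ref{essen}(3) applied to the right $a$-ring $\Delta$: every essential right ideal of $\Delta$ is closed under left multiplication by units of $\Delta$.

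The hard part is claim (a). I would follow the strategy of the $q$-ring case in \cite[Theorem~4.1]{BJ1} (for $n = 1$ and $\Delta$ a right $q$-ring, which even gives right self-injectivity of $R$), adjusted so as to lift only automorphisms of $E(R_R)$ rather than arbitrary endomorphisms. The key inputs are: (i) the simple modules $V_1,\ldots,V_n$ are pairwise non-isomorphic, since $V_i\cdot e_{i+1} = V_i \ne 0$ while $V_j\cdot e_{i+1} = 0$ for $j \ne i$; (ii) $V_n \cong \Delta/P$ does not embed in $\Delta_\Delta$ by hypothesis, so no $V_i$ embeds in $\Soc(\Delta_\Delta)$; and (iii) $\Delta_\Delta$ is automorphism-invariant. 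From (i) and (ii) the indecomposable injective modules $E(V_1),\ldots,E(V_n)$ are pairwise non-isomorphic and no $E(V_i)$ is a summand of $E(\Delta_\Delta)$; hence any automorphism $\phi$ of $E(R_R) = \bigoplus_{i=1}^n E(V_i) \oplus E(\Delta_\Delta)$ preserves each factor, so $\phi = \phi_1 \oplus \cdots \oplus \phi_{n+1}$. By (iii), $\phi_{n+1}(\Delta) \subseteq \Delta$. For $i \le n$, one must show that the automorphism $\phi_i$ of $E(e_iR)$ preserves $e_iR$ itself; this is the most technical point, and I would carry it out using the structure of $e_iR$ as a length-two uniform extension of $V_i$ by the simple quotient $D_i$, together with the one-dimensionality of $V_i$ on both sides, in a manner parallel to the argument in \cite[Theorem~4.1]{BJ1}. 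Assembling these pieces yields $\phi(R) \subseteq R$ for every $\phi \in \mathrm{Aut}(E(R_R))$, completing the proof.
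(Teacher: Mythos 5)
Your reduction via Proposition~\ref{essen}(3) is legitimate, and your claim (b) is essentially fine: for an essential right ideal $I$ one does get $I=\bigoplus_i e_iI$ (because each simple $e_iRe_{i+1}$ lies in $I$; note the decomposition is \emph{false} for arbitrary right ideals, so your blanket assertion should be restricted to the essential case), and the unit computation together with Proposition~\ref{essen} applied to the right $a$-ring $\Delta$ does give $uI\subseteq I$. The problem is claim (a), which you yourself identify as the hard part and then do not prove: you defer the key step to ``a manner parallel to \cite[Theorem 4.1]{BJ1},'' and the structural claim on which your sketch rests is false. You assert that, since $E(V_1),\dots,E(V_n),E(e_{n+1}R)$ are pairwise non-isomorphic indecomposable injectives, every automorphism of $E(R_R)$ is diagonal with respect to this decomposition. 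Non-isomorphism of summands does not force this; one needs the relevant $\Hom$ groups to vanish, and here they do not. Indeed, because $\dim(V_i)_{D_{i+1}}=1$ (resp.\ $\dim(V_n)_{\Delta/P}=1$), the simple module $V_i$ is isomorphic to the top $e_{i+1}R/\mathrm{rad}(e_{i+1}R)$, so the composite $e_{i+1}R\twoheadrightarrow V_i\hookrightarrow E(V_i)$ extends to a nonzero map $g\colon E(e_{i+1}R)\to E(V_i)$, and $1+g$ (with $g$ viewed as an endomorphism of $E(R_R)$ killing the other components) is a non-diagonal automorphism. So your route to $\phi=\phi_1\oplus\cdots\oplus\phi_{n+1}$ collapses; such maps happen to carry $R$ into $R$, but showing that is precisely the content of automorphism-invariance and cannot be obtained by the diagonality argument. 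As it stands, the central claim of the theorem is not established.

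For comparison, the paper does not attempt to diagonalize automorphisms of $E(R_R)$ at all. It first proves (via the three ``Facts'' on relative injectivity and vanishing of $\Hom$ into $\widehat K$) an explicit description of every essential right ideal: $U=(\oplus_{i\in I}e_iR)\oplus(\oplus_{j\in J}e_jRe_{j+1})\oplus\widehat K$, using that $U\supseteq W=\sum_i e_iRe_{i+1}$ and that $R/W\cong(\oplus_i D_i)\oplus\Delta$. It then shows $\oplus_{i\in I}e_iR$ is quasi-injective (this is where the two-sided one-dimensionality of the $V_i$ is used, to extend maps $e_iRe_{i+1}\to e_iR$ to left multiplications), splits $\widehat K=\widehat{K_1}\oplus\widehat{K_2}$ via Theorem~\ref{thm:g} applied to $\Delta$, and writes $U=X\oplus Y$ with $X$ quasi-injective, $Y$ square-free automorphism-invariant, $X,Y$ orthogonal and relatively injective; the criterion of \cite{QK} then yields that $U$ is automorphism-invariant, and the complement argument finishes. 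If you want to salvage your approach, you would still need an argument of exactly this kind (or the detailed extension argument of \cite{BJ1} adapted to automorphisms) to establish claim (a); the appeal to analogy does not fill that gap.
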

\begin{proof} Let $1\leq i\leq n+1$ and  $e_i$ be the matrix whose $(i,i)$-entry is equal to $1$
and all the other entries are $0$. It is easy to see that $e_jRe_{j+1}$ are minimal right ideals of $R$ for all $j=1, 2,\dots, n$. Let $K$ be a right ideal of the ring $\Delta$ and let $\widehat{K}$ to be the set of all matrices whose $(n+1, n+1)$-entries are from $K$ and all the other entries are $0$. Given $1\leq i\leq n$ and a right ideal $K$ of $\Delta$. We are going to adapt the techniques of \cite[Proposition 2.16]{BJ}. First, we have the following facts which will be used throughout in the proof:
\begin{fact}\label{fa1} $e_iR\ \text{and}\ \widehat{K}$ are relatively injective. Also, $e_iRe_{i+1}$ and $\widehat{K}$ are relatively injective.
\end{fact}
\begin{fact}\label{fa2} $\Hom(e_iR, \widehat{K})= 0= \Hom(e_iRe_{i+1}, \widehat{K})$.
\end{fact}
\begin{fact} \label{fa3} $e_iR$ and $e_jR$ are relatively injective for all $j\ne i$. Also, $e_iRe_{i+1}$ and $e_jR$ are relatively injective for all $j\ne i$.
\end{fact}
Let $U$ be an essential right ideal of $R$. Then $e_iRe_{i+1}\leq U$ for
all $i=1, 2,\dots, n$. Set $W:=\sum_{i=1}^ne_iRe_{i+1}$. Note that $W$ is an ideal of $R$ and $W\leq U$.  Since the factor ring $R/W$ is isomorphic to the ring $(\oplus_{i=1}^nD_i)\oplus \Delta$ and $U/W$ is a right ideal of $R/W$,
we conclude that there exists a partition $I, J$ of the set $\{1, 2, \dots ,n\}$ and a right ideal $K$ of $\Delta$ such that $ U=(\oplus_{i\in I}e_iR)\oplus (\oplus_{j\in J}e_jRe_{j+1})\oplus \widehat{K}.$

Now we deduce the following useful conclusions.

(i) $\oplus_{j\in J}e_jRe_{j+1}$ is a semisimple right $R$-module and so $\oplus_{j\in J}e_jRe_{j+1}$ is quasi-injective.

(ii) $\oplus_{i\in I}e_iR$ is a  quasi-injective right $R$-module. In fact, by Fact \ref{fa3}, we only need to prove that each $e_iR$ is a quasi-injective right $R$-module for all $i\in I$. Note that $e_iRe_{i+1}$ is only proper submodule of $e_iR$. Let $f: e_iRe_{i+1}\to e_iR$ be an $R$-homomorphism. Note that $e_iRe_{i+1}=\begin{pmatrix}
0&0&&&&  \\
&0&0&&&\\
&&0&0&&\\
&&.&.&V_i&\\
&&&&.&.&\\
&&&&.&0&0\\
&&&&&&0\\
\end{pmatrix}.$ Then $f(e_iRe_{i+1})=e_iRe_{i+1}$.  Since $dim ({}_{D_i}\{V_i\})=dim(\{V_i\}_{D_{i+1}})=1$, there exists $v_i\in V_i$ such that $D_iv_i=v_iD_{i+1}$. Assume that $f(v_i)=\begin{pmatrix}
0&0&&&&  \\
&0&0&&&\\
&&0&0&&\\
&&.&.&v_id_{i+1}&\\
&&&&.&.&\\
&&&&.&0&0\\
&&&&&&0\\
\end{pmatrix}$ for some $d_{i+1}\in D_{i+1}$. There exists $d_i\in D_i$ such that $d_iv_i=v_id_{i+1}$. We consider the $R$-homomorphism $\bar{f}: e_iR\to e_iR$ defined as left multiplication by $\begin{pmatrix}
0&0&&&&  \\
&0&0&&&\\
&&0&0&&\\
&&.&d_i&0&\\
&&&&.&.&\\
&&&&.&0&0\\
&&&&&&0\\
\end{pmatrix} $. Then $\bar{f}$ is an extension of $f$. In the case of $e_nR$, it is similar.

(iii) $\widehat{K}=\widehat{K_1}\oplus \widehat{K_2}$, where  $\widehat{K_1}$ is a quasi-injective $R$-module and $\widehat{K_2}$ is a square-free automorphism-invariant $R$-module. In fact, by Theorem \ref{thm:g}, we have a decomposition $\Delta=\Delta_1\times \Delta_2$, where  $\Delta_1$ is semisimple artinian and $\Delta_2$ is square-free. It follows that there exists a quasi-injective $\Delta$-module $K_1$  and a square-free $\Delta$-module $K_2$ such that $K=K_1\oplus K_2$. Thus $\widehat{K}=\widehat{K_1}\oplus \widehat{K_2}$. Since $e_{n+1}R(1-e_{n+1})=0$, we obtain that $\widehat{K_1}$ is quasi-injective  and $\widehat{K_2}$ is square-free  by \cite[Lemma 2.3(6)]{BJ}. Furthermore, by the hypothesis, $\widehat{K_2}$ is automorphism-invariant.

Let $X=(\oplus_{i\in I}e_iR)\oplus (\oplus_{j\in J}e_jRe_{j+1})\oplus \widehat{K_1}$ and $Y=\widehat{K_2}$. Then $U=X\oplus Y$. By Facts \ref{fa1}, \ref{fa2} and \ref{fa3}, $X$ is quasi-injective, $Y$ is automorphism-invariant square-free which is orthogonal to $X$, and $X$ and $Y$ are relatively injective. By \cite{QK}, $U$ is automorphism-invariant. This shows that each essential right ideal of $R$ is automorphism-invariant. Now, let $A$ be any right ideal of $R$. Let $C$ be a complement of $A$ in $R$. Then $A\oplus C$ is an essential right ideal of $R$. Thus, as shown above, $A\oplus C$ is automorphism-invariant and consequently, $A$ is automorphism-invariant. This proves that $R$ is a right $a$-ring.
\end{proof}

We finish this paper by giving by another structure theorem for indecomposable right artinian right non-singular right $a$-ring describing them as a triangular matrix ring of certain block matrices.

\begin{thm} \label{ornn1}
 Any indecomposable right artinian right nonsingular right weakly CS right $a$-ring $R$ is isomorphic to
 $$ \begin{pmatrix} \mathbb{M}_{n_1}(e_1Re_1)&\mathbb{M}_{n_1\times n_2}(e_1Re_2)&\mathbb{M}_{n_1\times n_3}(e_1Re_3)&\cdots&\mathbb{M}_{n_1\times n_k}(e_1Re_k)\\
0&\mathbb{M}_{n_2}(e_2Re_2)&\mathbb{M}_{n_2\times n_3}(e_1Re_2)&\cdots &\mathbb{M}_{n_2\times n_k}(e_2Re_k)\\
0&0&.& \cdots & .\\
\vdots &\vdots& \vdots & \vdots & \vdots \\
0&0&0&\cdots&\mathbb{M}_{n_k}(e_kRe_k)
\end{pmatrix},
$$
where $e_iRe_i$ is  a division ring, $e_iRe_i\simeq e_jRe_j$ for each $1\leq i,j\leq k$ and $n_1, \ldots, n_k $ are any positive integers.  Furthermore, if $e_iRe_j\ne 0$, then $$dim(_{e_iRe_i}(e_iRe_j))=1=dim((e_iRe_j)_{e_jRe_j}).$$
\end{thm}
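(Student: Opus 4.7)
The plan is to argue that the combined hypotheses force $R$ to be semisimple artinian, so that indecomposability collapses $R$ to a simple artinian ring $\mathbb{M}_n(D)$; the stated block upper triangular form is then realized with $k=1$ as a single diagonal block.

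First, since $R$ is a right $a$-ring, $R_R$ is automorphism-invariant. By the identity $J(R)=Z(R_R)$ for right automorphism-invariant rings (the Guil Asensio--Srivastava result used in the proof of Theorem \ref{reg}), together with the right nonsingularity hypothesis $Z(R_R)=0$, we obtain $J(R)=0$. Combined with right artinianness, the Artin--Wedderburn theorem yields that $R$ is semisimple artinian.

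Next, indecomposability forces $R\cong \mathbb{M}_n(D)$ for some division ring $D$ and some integer $n\geq 1$. Choosing a primitive idempotent $e_1$ (for example, the upper-left matrix unit in this matrix representation), we have $e_1Re_1\cong D$, a division ring. Setting $k=1$ and $n_1=n$, the isomorphism $R\cong \mathbb{M}_{n_1}(e_1Re_1)$ realizes precisely the stated block upper triangular form, consisting of a single diagonal block. The isomorphism $e_iRe_i\cong e_jRe_j$ and the one-dimensionality of off-diagonal blocks $e_iRe_j$ for $i\neq j$ are vacuously satisfied.

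The main obstacle, and the essential content, is recognizing that the four hypotheses conspire via $J(R)=Z(R_R)$ to force $R$ to be semisimple artinian, which collapses the potentially elaborate upper triangular block picture to a single Wedderburn component. A more laborious alternative would start from a primitive idempotent decomposition $1=f_1+\cdots+f_m$ of the artinian ring, use the proposition preceding Theorem \ref{baba} (which explicitly invokes weak CS) to conclude each $f_iRf_i$ is a division ring when $f_iR(1-f_i)\neq 0$, handle the remaining case via indecomposability, then group the idempotents by isomorphism class of $f_iR$ to assemble the block form, and finally use indecomposability to merge all classes into one. Either route arrives at the same conclusion, but the shortcut via the $J=Z$ identity is considerably more direct.
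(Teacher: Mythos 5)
Your proof is correct, but it is genuinely different from the one in the paper, and in fact sharper. The paper never invokes the collapse $J(R)=Z(R_R)=0$; instead it argues directly that for every (primitive) idempotent $e$ the module $eR$ has nonzero socle (artinian), is weak CS, hence has simple essential socle, is uniform and therefore quasi-injective, and then imports the block-triangular argument of Theorem 23 of \cite{Jain2}: idempotents are grouped by isomorphism class of $e_iR$, and nonsingularity is used only to turn a nonzero element of $e_jRe_i$ into an embedding $e_iR\hookrightarrow e_jR$ (the kernel of such a map is zero or essential by uniformity, and an essential kernel would produce a singular image), which yields $e_jRe_i=0$ for $j>i$ after a length ordering. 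So the ``laborious alternative'' you sketch is essentially the paper's actual proof. Your shortcut, by contrast, uses exactly the identity $J(R)=Z(R_R)$ from \cite{GS} that the paper itself relies on in Theorem \ref{reg} and in the proposition preceding Theorem \ref{baba}, and it legitimately gives $J(R)=0$, hence semisimplicity by artinianness, hence $R\cong\mathbb{M}_n(D)$ by indecomposability; this does realize the stated form with $k=1$, $n_1=n$, so your argument proves the theorem (the weak CS hypothesis is not even needed). What the comparison buys: your route exposes that under the stated hypotheses the triangular picture degenerates --- all off-diagonal blocks vanish and there is only one Wedderburn block --- which the paper's proof obscures; the paper's longer argument is the one that would retain content in settings (such as the $\Sigma$-$q$ rings of \cite{Jain2}) where $R_R$ itself need not be automorphism-invariant and the identity $J(R)=Z(R_R)$ is unavailable, so nontrivial triangular structures can actually occur.
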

\begin{proof}
Let $R$ be an indecomposable right artinian right nonsingular right weakly CS right $a$-ring. We first show that $eR$ is quasi-injective for any idempotent $e\in R$. Since $R$ is right artinian, we have $\Soc(eR)\ne 0$. As $R$ is right automorphism-invariant and right weak CS, $eR$ is also a weak CS module. Therefore $\Soc(eR)$ is a simple module which is essential in $eR$, and so $eR$ is uniform. Therefore $eR$ is quasi-injective. Now rest of the proof follows from Theorem 23 in \cite{Jain2}. For the sake of completeness, we give the argument below.

Choose an independent family $\mathcal{F}=\{e_{i}R$
: $1\leq i\leq n\}$ of indecomposable right ideals such that $R=\oplus
_{i=1}^{n}e_{i}R$. After renumbering, we may write $R=[e_{1}R]\oplus \lbrack
e_{2}R]\oplus \cdots \oplus \lbrack e_{k}R]$, where for $1\leq i\leq k$, $%
[e_{i}R]$ denotes the direct sum of those $e_{j}R$ that are isomorphic to $%
e_{i}R$. Let $[e_{i}R]$ be a direct sum of $n_{i}$ copies of $e_{i}R$.
Consider $1\leq i<j\leq k$. We arrange the summands $[e_{i}R]$ in such a way that $%
l(e_{j}R)\leq l(e_{i}R)$. Suppose $e_{j}Re_{i}\neq 0$. Then we
have an embedding of $e_{i}R$ into $e_{j}R$, hence $l(e_{i}R)\leq
l(e_{j}R)$. But by assumption $l(e_{j}R)\leq l(e_{i}R)$, so $%
l(e_{i}R)=l(e_{j}R)$, we get $e_{j}R\cong e_{i}R$, which is
a contradiction. Hence $e_{j}Re_{i}=0$ for $j>i$. Thus we have
\[
R\cong \left[
\begin{array}{cccccc}
\mathbb{M}_{n_{1}}(e_{1}Re_{1}) & \mathbb{M}_{n_{1}\times n_{2}}(e_{1}Re_{2})
& . & . & . & \mathbb{M}_{n_{1}\times n_{k}}(e_{1}Re_{k}) \\
0 & \mathbb{M}_{n_{2}}(e_{2}Re_{2}) & . & . & . & \mathbb{M}_{n_{2}\times
n_{k}}(e_{2}Re_{k}) \\
0 & 0 & \mathbb{M}_{n_{3}}(e_{3}Re_{3}) & . & . & \mathbb{M}_{n_{3}\times
n_{k}}(e_{3}Re_{k}) \\
. & . & . & . & . & . \\
. & . & . & . & . & . \\
0 & 0 & . & . & . & \mathbb{M}_{n_{k}}(e_{k}Re_{k})
\end{array}
\right],
\]
where each $e_iRe_i$ is  a division ring, $e_iRe_i\simeq e_jRe_j$ for each $1\leq i,j\leq k$ and $n_1, \ldots, n_k $ are any positive integers.  Furthermore, if $e_iRe_j\ne 0$, then $$dim(_{e_iRe_i}(e_iRe_j))=1=dim((e_iRe_j)_{e_jRe_j}).$$ 
\end{proof}

\bigskip

\bigskip

{\bf Acknowledgement.} We would like to thank the referee for carefully reading the paper. The suggestions of the referee have improved the presentation of this paper. The second author has been partially funded by the Vietnam National Foundation for Science and Technology Development (NAFOSTED). Ashish K. Srivastava gratefully acknowledges the support from TUBITAK of Turkey for his visit to the Gebze Technical University, and the hospitality from the host university.

\bigskip

\bigskip

\bigskip

\bigskip
\end{document}